\theoremstyle{plain} 
\newtheorem{lemma}[equation]{Lemma} 
\newtheorem{proposition}[equation]{Proposition} 
\newtheorem{theorem}[equation]{Theorem} 
\newtheorem*{pz}{Paley-Zygmund Inequality}
\newtheorem*{pz2}{Second Paley-Zygmund Inequality}
\newtheorem*{roth}{K.~Roth's Theorem}
\newtheorem*{schmidt}{Schmidt's Theorem}
\newtheorem*{sbi}{Conjecture: The Small Ball Inequality}
\newtheorem*{ssbi}{Conjecture: The Signed Small Ball Inequality}
\theoremstyle{definition}
\newtheorem{definition}[equation]{Definition} 
\theoremstyle{remark}
\newtheorem{remark}[equation]{Remark}
\newtheorem*{dedication}{Dedication to Walter Philipp}
\numberwithin{equation}{section}
\def\norm#1.#2.{\lVert#1\rVert_{#2}}
\def\Norm#1.#2.{\bigl\lVert#1\bigr\rVert_{#2}}
\def\NOrm#1.#2.{\Bigl\lVert#1\Bigr\rVert_{#2}}
\def\NORm#1.#2.{\biggl\lVert#1\biggr\rVert_{#2}}
\def\NORM#1.#2.{\Biggl\lVert#1\Biggr\rVert_{#2}}
\def\ip#1,#2,{\langle #1,#2\rangle}
\def\Ip#1,#2,{\bigl\langle#1,#2\bigr\rangle}
\def\IP#1,#2,{\Bigl\langle#1,#2\Bigr\rangle}
\def\abs#1{\lvert#1\rvert}
\def\ABs#1{\biggl\lvert#1\biggr\rvert}
\def\XXint#1#2#3{{\setbox0=\hbox{$#1{#2#3}{\int}$}
     \vcenter{\hbox{$#2#3$}}\kern-.5\wd0}}
\begin{document}
\title {A Three Dimensional Signed Small Ball Inequality }
 \subjclass[2000]{Primary: 11K38, 41A46 Secondary: 42A05, 60G17 }
 \keywords{Discrepancy function, small ball inequality, Brownian Sheet, Littlewood-Paley
inequalities, Haar functions, Kolmogorov entropy, mixed derivative }

\thanks{The authors are grateful to the Fields Institute, and the American Mathematical Institute
for hospitality and support, and to the National Science Foundation for support through the 
grants DMS-0456538 and DMS-0801036}

\author[D. Bilyk]{Dmitriy Bilyk }
\address {School of Mathematics, Institute for Advanced Study, Princeton, NJ 08540, USA.}
\email{bilyk@math.ias.edu}

\author[M.T. Lacey]{Michael T. Lacey}
\address{ School of Mathematics, Georgia Institute of Technology, Atlanta GA 30332, USA.}
\email{lacey@math.gatech.edu}

\author[I. Parissis]{Ioannis Parissis}
\address{Institutionen f\"or Matematik, Kungliga Tekniska H\"ogskolan, SE 100 44, Stockholm, SWEDEN.}
\email{ioannis.parissis@gmail.com}

\author[A. Vagharshakyan]{Armen Vagharshakyan }
\address{ School of Mathematics, Georgia Institute of Technology, Atlanta GA 30332, USA.}
\email{armenv@math.gatech.edu}

\begin{abstract} Let $ R$ denote dyadic rectangles in the unit cube $ [0,1] ^{3}$ in three dimensions. 
Let $ h_R$ be the $ L ^{\infty }$-normalized Haar function whose support is $ R$.  We show that 
for all integers $ n\ge 1$ and choices of coefficients $a_R\in \{\pm1\} $, we have 
\begin{equation*}
\NOrm \sum _{\substack{\lvert  R\rvert= 2 ^{-n}\\ \lvert  R_1\rvert\ge 2 ^{-n/2} } } a_R \, h_R . L ^{\infty }. \gtrsim n ^{9/8}\,. 
\end{equation*}
The trivial $ L ^{2}$ lower bound is $ n$, and the sharp lower bound would be $ n ^{3/2}$.  This is the best 
exponent known to the authors. This inequality is motivated
by new results on the star-Discrepancy function in all dimensions $ d\ge 3$.  

\end{abstract}

\maketitle

\section{Introduction} 
 
We are motivated by the classical question of irregularities of distribution \cite{MR903025} and recent results 
which give new lower bounds on the star-Discrepancy in all dimensions $ d\ge 3$ \cites{MR2409170,MR2414745}.  We recall these results.

Given integer $ N$, and selection $ \mathcal P$ of $ N$ points in the unit cube $ [0,1] ^{d}$, we define a \emph{Discrepancy 
Function} associated to $ \mathcal P$ as follows.  At any point $ x\in [0,1] ^{d}$, set 
\begin{equation*}
D_N (x) = \sharp ( \mathcal P \cap [0,x))- N \lvert  [0,x)\rvert \,.  
\end{equation*}
Here, by $ [0,x)$ we mean the $ d$-dimensional rectangle with left-hand corner at the origin, and 
right-hand corner at $ x\in [0,1] ^{d}$.  Thus, if we write $ x= (x_1 ,\dotsc, x_d)$ we then have 
\begin{equation*}
[0,x)= \prod _{j=1} ^{d} [0,x_j) \,. 
\end{equation*}
At point $ x$ we are taking the difference between the actual  and 
the expected number of points in the rectangle. 
  Traditionally, the dependence of $ D_N$ 
on the selection of points $ \mathcal P$ is only indicated through the number of points in the collection 
$ \mathcal P$.  We mention only the main points of the subject here, and leave the (interesting)
history of the subject to references such as \cite{MR903025}.  

The result of Klaus Roth  \cite{MR0066435} gives a definitive average case lower bound on the Discrepancy function.  

\begin{roth} 
For any dimension $ d\ge 2$,   we have the following estimate 
\begin{equation} \label{e.roth}
\norm D_N. 2. \gtrsim (\log N) ^{(d-1)/2} \,. 
\end{equation}
\end{roth}

The same lower bound holds in all $ L ^{p}$, $ 1<p<\infty $, as observed by Schmidt \cite{MR0319933}. 
But, the $ L ^{\infty } $ infinity estimate is much harder.  In dimension $ d=2$ the definitive 
result was obtained by Schmidt again \cite{MR0491574}.

\begin{schmidt}  \label{t.schmidt}
We have the estimates below, valid for all collections $\mathcal P\subset [0,1]^2 $:  
\begin{align}  \label{e.schmidt}
\norm D_N .\infty. {}\gtrsim{} \log N . 
\end{align}
\end{schmidt} 

The $ L ^{\infty }$ estimates are referred to as star-Discrepancy bounds.  Extending and greatly 
simplifying an intricate estimate of Jozef Beck \cite{MR1032337}, some of these authors have obtained a partial 
extension of Schmidt's result to all dimensions $ d\ge3$.

\begin{theorem}\label{t.dge3}[\cites{MR2409170,MR2414745}] For dimensions $ d\ge 3$ there is an $ \eta = \eta (d)>0$ for which we have 
the inequality 
\begin{equation*}
\norm D_N. \infty . \gtrsim (\log N) ^{(d-1)/2 + \eta } \,. 
\end{equation*}
That is, there is an $ \eta $ improvement in the Roth exponent. 
\end{theorem}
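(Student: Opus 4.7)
The plan is to follow the Roth--Halász--Beck strategy as refined in the work of Bilyk--Lacey--Vagharshakyan, namely to test the discrepancy function against a carefully constructed ``hyperbolic Riesz product'' of Haar-like building blocks, trading cleverness in the test function for an improvement in the exponent over Roth's $L^2$ bound.

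First, set $n$ to be the unique integer with $2^{n-1}\le N< 2^{n}$ (up to a harmless constant) and expand $D_N$ in the three-parameter Haar basis on $[0,1]^d$. The key observation, going back to Roth, is that for a dyadic rectangle $R$ of volume $2^{-n}$ that contains no point of $\mathcal{P}$ in its interior, the Haar coefficient $\langle D_N,h_R\rangle$ has absolute value $\gtrsim |R|= 2^{-n}$. A counting argument shows that for a positive proportion of the rectangles $R$ with $|R|=2^{-n}$, such a ``bad'' configuration occurs. Record the signs $\varepsilon_R\in\{\pm1\}$ so that $\varepsilon_R\langle D_N,h_R\rangle\ge 0$, and group the rectangles by shape: for each multi-index $\vec r=(r_1,\dots,r_d)$ with $r_j\ge 0$ and $r_1+\dots+r_d=n$, define the $r$-function
\begin{equation*}
 f_{\vec r}= \sum_{|R_j|=2^{-r_j}} \varepsilon_R\, h_R,
\end{equation*}
which is a $\pm1$-valued function on $[0,1]^d$.

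Next, choose a set $\mathbb{A}$ of shape vectors $\vec r$ with $|\mathbb{A}|\approx n^{1+\eta_0}$ for a small $\eta_0=\eta_0(d)>0$, and form the test function
\begin{equation*}
 \Phi= \prod_{\vec r\in\mathbb{A}}\bigl(1+ \gamma\, f_{\vec r}\bigr),\qquad \gamma\approx n^{-1/2-\eta_0/2}.
\end{equation*}
The scheme is now to prove two estimates: (i) $\|\Phi\|_1\lesssim 1$, and (ii) $\langle D_N,\Phi\rangle\gtrsim \gamma\, n\, |\mathbb{A}|$. Coupled with $\|D_N\|_\infty\cdot\|\Phi\|_1\ge \langle D_N,\Phi\rangle$, this yields $\|D_N\|_\infty\gtrsim \gamma\, n\,|\mathbb{A}|\approx n^{(d-1)/2+\eta}$ for a suitable $\eta>0$, sharper than Roth's bound $n^{(d-1)/2}$.

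The lower bound in (ii) is comparatively easy: the linear terms $\gamma f_{\vec r}$ in the expansion of $\Phi$ produce $\sum_{\vec r\in\mathbb{A}}\gamma \langle D_N, f_{\vec r}\rangle$, and by the sign choice each such inner product is $\gtrsim n\cdot|\{R\colon \text{shape}(R)=\vec r\}|\cdot 2^{-n}=n$ in a normalized sense. The genuine difficulty---and where the restriction $d\ge 3$ and the loss compared to the conjectural sharp exponent $n/2$ in the small ball inequality come in---is controlling the cross terms in (i). Expanding the product gives sums $\gamma^k f_{\vec r_1}\cdots f_{\vec r_k}$; these are linear combinations of Haar functions indexed by rectangles $R=R_1\cap\cdots\cap R_k$, and the analysis splits according to whether the $\vec r_j$'s form a ``coincidence-free'' chain (in which case the product is an $r$-function of volume-type $2^{-kn}$ and is well controlled) or exhibit many coincidences (handled by a product/conditional expectation argument). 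The main obstacle is to choose $\mathbb{A}$ sparse enough, and $\gamma$ small enough, that the coincidence contributions are absorbed into $\|\Phi\|_1\lesssim 1$, yet dense and large enough that the main term in (ii) beats Roth's exponent by a definite amount. This is precisely where the partial small-ball-type inequalities (of which the main theorem of this paper is a prototype) enter: they furnish the combinatorial input needed to estimate $\|\sum_{\vec r\in\mathbb{A}}f_{\vec r}\|_\infty$ beyond the orthogonality bound, which by Chang--Wilson--Wolff exponential-square integrability translates into the required control on $\|\Phi\|_1$.
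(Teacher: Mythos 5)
Theorem~\ref{t.dge3} is not proved in this paper at all --- it is quoted from \cites{MR2409170,MR2414745} --- so your attempt can only be measured against the proofs in those references. You have correctly identified their strategy: Roth's duality setup, the $\mathsf r$-functions built from the signs of the Haar coefficients of $D_N$ (and the fact that empty boxes of volume $2^{-n}$ carry coefficients of size $\approx N|R|^2\approx 2^{-n}$), and a Riesz-product test function whose $L^1$ norm must stay bounded while its pairing with $D_N$ retains the linear-term gain. But as a proof there is a genuine gap: your two key claims (i) and (ii) are asserted, not proved, and (i) is where the entire content of the theorem lies. For $d\ge 3$, $\int\Phi$ equals $1$ plus integrals of products $f_{\vec r_1}\cdots f_{\vec r_k}$, and these are \emph{not} mean zero once coincidences occur in some coordinate; controlling them is exactly the ``Beck gain'' problem (the analogues of Lemma~\ref{l.exp} here, cf.\ Section 5 of \cite{MR2409170} and the coincidence estimates of \cite{MR2414745}), and in the cited proofs the product is not taken over individual shapes at all but over $q\approx n^{\epsilon}$ blocks of shapes, with ``false'' $\mathsf r$-functions as factors, combined with conditional expectation and exponential-square (Chang--Wilson--Wolff type) estimates, precisely so that the coincidence terms can be absorbed. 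Saying the cross terms are ``handled by a product/conditional expectation argument'' names the difficulty without resolving it.

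Your numerology is also internally inconsistent. The standard lower bound is $\langle D_N,f_{\vec r}\rangle\gtrsim 1$ per shape (about $2^{n-1}$ empty boxes, each contributing $\approx 2^{-n}$), not $\gtrsim n$; with your parameters $\gamma\approx n^{-1/2-\eta_0/2}$ and $\#\mathbb A\approx n^{1+\eta_0}$ the linear term is $\gamma\,\#\mathbb A\approx n^{1/2+\eta_0/2}$, which falls \emph{below} Roth's exponent $(d-1)/2$ already at $d=3$, while the bound you wrote, $\gamma\, n\,\#\mathbb A\approx n^{3/2+\eta_0/2}$, would exceed even the conjectured sharp exponent --- a sign that the bookkeeping cannot be right. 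In \cites{MR2409170,MR2414745} the gain over Roth comes from a different mechanism: one keeps essentially all $\approx n^{d-1}$ shapes, organizes them into a small number $q$ of blocks, and extracts a $\sqrt q$-type gain from the short Riesz product, very much in the spirit of the block argument of the present paper. So your proposal is a correct identification of the method of the cited works, but it is an outline with the decisive estimates missing, not a proof.
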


As explained in these references, the analysis of the star-Discrepancy function is closely related to other 
questions in probability theory, approximation theory, and harmonic analysis.  We turn to one of these, 
the simplest to state question, which is central to all of these issues.  We begin with the definition of the Haar
functions.

In one dimension, the dyadic intervals of the real line $ \mathbb R $ are given by 
\begin{equation*}
\mathcal D = \bigl\{ [j 2 ^{k}, (j+1) 2 ^{k}) \;:\; j,k\in \mathbb Z  \bigr\}\,. 
\end{equation*}
Any interval $ I$ is a union of its left and right halves, denoted by $ I _{\textup{left/right}}$, 
which are also dyadic. 
The \emph{Haar function $ h_I$ associated to $ I$}, or simply \emph{ Haar function} is 
\begin{equation*}
h_I = -\mathbf 1_{I _{\textup{left}}}+ \mathbf 1_{I_{\textup{right}}}
\end{equation*}
 Note that for dyadic intervals $ J\subsetneq I$, the  
Haar function $ h_J$ is completely supported on a set where $ h_I $ is 
constant.  This basic property leads to far-reaching implications that we will exploit in these 
notes.  

In higher dimensions $ d\ge 2$, we take the dyadic rectangles to be the tensor product of dyadic intervals in dimension $ d$:
\begin{equation*}
\mathcal D ^{d} = \bigl\{ R=R_1 \times \cdots \times R_d \;:\; R_1 ,\dotsc, R_d\in \mathcal D \bigr\}\,. 
\end{equation*}
The \emph{Haar function} associated to $ R\in \mathcal D_d$ is likewise defined as 
\begin{equation} \label{e.haarFunctionDef}
h _{R } (x_1 ,\dotsc, x_d) = \prod _{j=1} ^{d} h _{R_j} (x_j)\,, \qquad R= R_1 \times \cdots \times R_d\,. 
\end{equation}
While making these definitions on all of $ \mathbb R ^{d}$, we are mainly interested in local questions, 
thus rectangles $ R \subset [0,1] ^{d}$ are always dyadic rectangles $ R\in \mathcal D ^{d}$. 
Namely, we are mainly interested in the following conjectural \emph{reverse triangle inequality} for sums of Haar functions
on $ L ^{\infty }$: 

\begin{sbi} For dimensions $ d\ge 3$, there is a constant $ C_d$ so that for all integers $ n\ge 1$, and 
constants $ \{ a_R \;:\; \lvert  R\rvert=2 ^{-n}\,,\ R\subset [0,1] ^{d} \}$, we have 
\begin{equation} \label{e.sbi}
n ^{ (d-2)/2} \NORm \sum 
_{\substack{\lvert  R\rvert\ge 2 ^{-n} \\ R\subset [0,1] ^{d} }} 
a_R \cdot h_R . \infty . \ge C_d 2 ^{-n} \sum 
_{\substack{\lvert  R\rvert=2 ^{-n} \\ R\subset [0,1] ^{d} }}  \lvert  a_R\rvert \,.  
\end{equation}
\end{sbi}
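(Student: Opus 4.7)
The natural approach is duality. To establish the conjectured inequality it suffices, for each sign pattern $\epsilon_R = \operatorname{sign}(a_R)$, to produce a test function $\Psi$ on $[0,1]^{d}$ with $\norm \Psi. 1 .\lesssim 1$ and
\begin{equation*}
\IP \Psi, \sum_{\lvert R\rvert = 2^{-n}} a_R\, h_R,\gtrsim n^{-(d-2)/2}\cdot 2^{-n} \sum_{\lvert R\rvert=2^{-n}} \lvert a_R\rvert\,.
\end{equation*}
The canonical candidate is a Riesz product built from the \emph{$r$-functions}. For each composition $\vec{r}=(r_1,\dotsc,r_d)\in\mathbb Z_{\ge 0}^{d}$ with $r_1+\dotsb+r_d=n$, the dyadic rectangles with $\lvert R_j\rvert=2^{-r_j}$ tile $[0,1]^{d}$; writing $\mathcal H_{\vec{r}}$ for this family, define the $L^{\infty}$-normalized function
\begin{equation*}
f_{\vec{r}}(x) = \sum_{R\in\mathcal H_{\vec{r}}} \epsilon_R\, h_R(x)\,.
\end{equation*}
For a small parameter $\gamma>0$, the Riesz product $\Psi=\prod_{\vec{r}}(1+\gamma f_{\vec{r}})$ has mean one, is non-negative if the full product expansion does not change sign, and at leading order in $\gamma$ satisfies $\Ip \Psi, h_R, \approx \gamma\,\epsilon_R\, 2^{-n}$. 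Under these two assumptions one would immediately recover the desired lower bound.

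The plan is therefore (i) to expand $\Psi$ into products $f_{\vec{r}_1}\cdots f_{\vec{r}_k}$ indexed by unordered tuples of compositions; (ii) use that each such product is itself a signed sum of characteristic functions of dyadic sub-rectangles, with cancellations governed by the hyperbolic combinatorics of the $\vec{r}_i$; (iii) show that the $L^{1}$ norm contribution of the higher-order terms is negligible by Littlewood--Paley/martingale square function control and the sparsity of the compositions used; and finally (iv) isolate the first-order pairing with $h_R$, using orthogonality to discard the higher-order cross terms. This program succeeds in $d=2$ (Talagrand, Temlyakov), where there are only $n+1$ compositions, every pair of distinct compositions is comparable, and the product $f_{\vec{r}}f_{\vec{r}'}$ again decomposes into Haar functions of rectangles of volume $2^{-n}$.

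The main obstacle, and the reason this conjecture remains open for $d\ge 3$, is the explosion in the number of compositions: there are $\binom{n+d-1}{d-1}\asymp n^{d-1}$ of them, and a pair $\vec{r},\vec{r}'$ can be incomparable coordinatewise, so that $f_{\vec{r}}f_{\vec{r}'}$ involves cancellations on rectangles of volume strictly larger than $2^{-n}$. These ``coincidences'' simultaneously destroy the positivity/$L^{1}$-bound of $\Psi$ and the orthogonality that is needed to read off $\Ip \Psi, h_R,$. The \emph{short Riesz product} strategy of \cites{MR2409170,MR2414745} circumvents this by restricting the product to a carefully chosen subfamily $S\subset\{\vec{r}:\sum r_j=n\}$ of cardinality $\lvert S\rvert\asymp n$, at which scale the expansion can be truncated after a bounded number of orders and the cross-term estimates can be pushed through by a small-deviation argument; this gives an $n^{\eta}$ improvement over the $L^{2}$ bound, of which the present paper's $n^{9/8}$ estimate in $d=3$ is a particular instance.

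To prove the full conjecture by this method one would have to enlarge $S$ to cardinality $\asymp n^{d-2}$ while still certifying $\norm \Psi. 1.\lesssim 1$. The hard part is precisely this $L^{1}$ control: all available tools (Littlewood--Paley, hyperbolic Riesz products, probabilistic selection of $S$, and the exponential square integrability estimates for lacunary sums of $r$-functions) give bounds that degrade in $\lvert S\rvert$ at the wrong rate, and closing this gap would require a genuinely new cancellation mechanism for products of $r$-functions in dimension $d\ge 3$.
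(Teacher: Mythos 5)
The statement you set out to prove is, in the paper itself, an \emph{open conjecture}: the Small Ball Inequality for $d\ge 3$ is stated without proof, and the paper's actual contribution is the far weaker Theorem~\ref{t.3} (a lower bound of $n^{9/8}$ in dimension three, only for coefficients $a_R\in\{\pm1\}$ and only for rectangles with $\lvert R_1\rvert\ge 2^{-n/2}$), proved by a conditional-expectation/Paley--Zygmund argument rather than by a Riesz product. So there is no paper proof to compare against, and your submission, read as a proof of \eqref{e.sbi}, has a genuine gap --- indeed the gap is the entire crux. Steps (i)--(iv) of your plan are a program, not an argument: the decisive step is certifying $\lVert\Psi\rVert_{L^1}\lesssim 1$ (or nonnegativity) for a Riesz product over all $\asymp n^{d-1}$ compositions $\vec r$, together with enough orthogonality to read off the pairing $\langle\Psi,h_R\rangle\approx\gamma\,\epsilon_R\,2^{-n}$, and you yourself concede in the final paragraph that this cannot be done with the available tools and ``would require a genuinely new cancellation mechanism.'' A proposal that ends by acknowledging the key estimate is out of reach is an accurate description of the state of the art, but it does not establish the inequality.

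Your diagnosis of \emph{why} the program stalls is essentially correct and matches the literature you cite: in $d=2$ the product $\prod_{\vec r}(1+f_{\vec r})$ (one can even take $\gamma=1$) is automatically nonnegative with mean one because the $f_{(j,n-j)}$ are genuinely independent (this is the paper's Proposition~\ref{p.independent}), whereas for $d\ge3$ coincidences $r_1=s_1$ produce products $f_{\vec r}f_{\vec s}$ supported on rectangles of volume larger than $2^{-n}$, destroying both positivity and orthogonality; the short Riesz products of \cites{MR2409170,MR2414745} only recover an $n^{\eta}$ gain, and the conjecture demands a full $\sqrt n$ gain over the trivial $L^2$ bound. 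One small caveat: the present paper's $n^{9/8}$ result is not literally ``a particular instance'' of the short Riesz product method, since it proceeds by conditioning on the first coordinate, block sums $B_t$, the coincidence estimate of Lemma~\ref{l.exp}, and Lemma~\ref{l.condIndep}; but that result concerns only the signed, restricted case and is not the statement at issue. As written, then, your text should be presented as a survey of the obstruction, not as a proof of the conjecture.
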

We are stating this inequality in its strongest possible form.  On the left, the sum goes over all 
rectangles with volume \emph{at least} $ 2 ^{-n}$, while on the right, we only sum over rectangles with 
volume \emph{equal to} $ 2 ^{-n}$. Given the primitive state of our knowledge of this conjecture, 
we will not insist on this distinction  below.

For the case of $ d=2$,  \eqref{e.sbi} holds, and is a Theorem of Talagrand \cite{MR95k:60049}.
(Also see \cites{MR637361,MR0319933,MR96c:41052}).    

The special case of the Small Ball Inequality when all the coefficients $ a_R$ are equal to either 
$ -1$ or $ +1$ we refer to as the `Signed Small Ball Inequality.' Before stating this conjecture, 
let us note that we have the following (trivial) variant of Roth's Theorem in the Signed case: 
\begin{equation*}
 \NORm \sum 
_{\substack{\lvert  R\rvert= 2 ^{-n} \\ R\subset [0,1] ^{d} }} 
a_R \cdot h_R . \infty .  \gtrsim n ^{ (d-1)/2} \,, \qquad a_R\in \{\pm 1\} \,. 
\end{equation*}
The reader can verify this by noting that the left-hand side can be written as about $ n ^{d-1}$ orthogonal 
functions, by partitioning the unit cube into homothetic copies of dyadic rectangles of a fixed volume.  
The Signed Small Ball Inequality asserts a `square root of $ n$' gain over this average case estimate. 

\begin{ssbi} For coefficients $ a_R \in \{\pm1\}$, 
\begin{equation} \label{e.signed}
\NORm \sum 
_{\substack{\lvert  R\rvert=2 ^{-n} \\ R\subset [0,1] ^{d} }} 
a_R \cdot h_R . \infty . \ge C'_d n ^{ d/2} \,, \qquad \,.   
\end{equation}
Here, $ C'_d$  is a constant that only depends upon dimension.  
\end{ssbi}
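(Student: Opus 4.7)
The plan is to establish \eqref{e.signed} by duality: construct a nonnegative test function $ \Psi $ on $ [0,1] ^{d}$ with $ \norm \Psi.1.\le 1$ for which
\begin{equation*}
\int \Psi \cdot F \;\gtrsim\; n ^{d/2}\,, \qquad F \eqdef \sum _{\substack{\lvert R\rvert=2 ^{-n} \\ R\subset [0,1] ^{d}}} a_R h_R\,.
\end{equation*}
Organize $ F$ by \emph{type}: a type is a vector $ \vec r=(r_1,\dotsc,r_d)\in \mathbb Z _{\ge 0} ^{d}$ with $ r_1+\dotsb+r_d=n$, and the associated $ r$-function is
\begin{equation*}
f _{\vec r}=\sum _{\substack{\lvert R\rvert=2 ^{-n} \\ \lvert R_j\rvert=2 ^{-r_j}}} a_R\, h_R\,,\qquad \lvert f _{\vec r}(x)\rvert=1 \;\text{on } [0,1] ^{d}\,.
\end{equation*}
The candidate dual is the Riesz product
\begin{equation*}
\Psi = \prod _{\vec r \in \mathcal T}\bigl(1+\gamma f _{\vec r}\bigr)\,,\qquad \gamma \in (0,1)\,,
\end{equation*}
over a judiciously chosen family of types $ \mathcal T$; each factor is nonnegative, so $ \Psi \ge 0$.

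Mirroring Talagrand's two-dimensional prototype, the argument proceeds in three stages. \emph{Stage~1}: select $ \mathcal T$ so that every product $ f _{\vec r_1}\cdots f _{\vec r_k}$ of distinct $r$-functions drawn from $ \mathcal T$ has vanishing mean; then $ \int \Psi = 1$, so $ \norm \Psi.1. = 1$. \emph{Stage~2}: expand
\begin{equation*}
\int \Psi \cdot F \;=\; \gamma \sum _{\vec r\in \mathcal T}\int f _{\vec r}^{\,2}\;+\;(\text{off-diagonal}) \;=\; \gamma\, \lvert \mathcal T\rvert \;+\; \text{error}\,,
\end{equation*}
and bound the error below an absolute fraction of the main term by reusing the cross-term annihilations of Stage~1. \emph{Stage~3}: produce a $ \mathcal T$ with $ \lvert \mathcal T\rvert \gtrsim n^{d/2}$ and $ \gamma $ an absolute constant, yielding the claimed lower bound.

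The main obstacle is Stage~1. In $ d=2$, any two distinct types $ \vec r\neq \vec s$ satisfy $ r_1\ne s_1$ and $ r_2\ne s_2$ (since $ r_1+r_2=s_1+s_2=n$), and the product $ h _{R_j}h _{S_j}$ of two one-dimensional Haar functions of different lengths is itself a mean-zero function supported on $ R_j\cap S_j$. Consequently every cross-term in the expansion of $ \Psi $ vanishes in mean, $ \mathcal T$ may be taken to be all $ n+1$ types, and one recovers Talagrand's sharp $ n$ bound. In $ d\ge 3$, two distinct types can agree in up to $ d-2$ coordinates, and whenever $ r_j=s_j$ the factor $ h _{R_j}h _{S_j}=\mathbf 1 _{R_j}$ on the diagonal $ R_j=S_j$ and $ 0$ otherwise, producing a nonzero contribution along the $ j$-th axis so that $ \int f _{\vec r}f _{\vec s}\ne 0$ in general. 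One is therefore forced to restrict $ \mathcal T$ to a \emph{coordinatewise-Sidon} family in which no two types agree in any coordinate, and a direct pigeonhole on the $ n+1$ available values per coordinate yields $ \lvert \mathcal T\rvert \le n+1$. Thus the naive Riesz product delivers only an $ n$ lower bound in every dimension, and closing the remaining $ n^{(d-2)/2}$ gap up to the conjectured $ n^{d/2}$ seems to require a genuinely new device — a conditional or hybrid Riesz product absorbing controlled coordinate collisions, an entropy-based argument exploiting the full Kolmogorov entropy of the admissible sign vectors, or a variant of Chang--Wilson--Wolff capable of producing pointwise rather than merely distributional lower bounds. The $ n^{9/8}$ bound in dimension $ d=3$ obtained in the present paper is precisely a first, fractional, step past this $ n$ barrier; closing the full gap to $ n^{3/2}$, let alone $ n^{d/2}$ for every $ d\ge 3$, remains the central open problem.
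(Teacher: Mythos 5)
Your submission is not a proof of the statement, and you say as much in your final paragraph: after setting up the Riesz--product duality you conclude that Stage~1 forces a coordinatewise-distinct family of types with $\lvert \mathcal T\rvert\le n+1$, so the construction stops at an $n$ lower bound and the gap up to $n^{d/2}$ ``remains the central open problem.'' That concession is the gap, and it is the whole statement: the Signed Small Ball Inequality is stated in the paper as a \emph{conjecture}, and the paper does not prove it either --- its main result (Theorem~\ref{t.3}) is the much weaker partial bound $n^{9/8}$ in dimension $d=3$, and only for sums restricted to rectangles with $\lvert R_1\rvert\ge 2^{-n/2}$. So what you have written is an accurate diagnosis of why the naive Riesz product fails in $d\ge 3$ (coordinate coincidences $r_j=s_j$ destroy the mean-zero property of cross terms), together with a correct account of the two-dimensional case, but no argument establishing \eqref{e.signed}. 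Note also that for $d\ge 4$ your coordinatewise-Sidon Riesz product yields only $\gtrsim n$, which is \emph{weaker} than the trivial orthogonality bound $n^{(d-1)/2}$ recorded in the paper; even as a partial result it does not advance the state of the art.

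For comparison, the route the paper actually takes to get past the $n$-barrier in $d=3$ is not a Riesz product at all: it fixes blocks $\mathbb I_t$ of types according to the first coordinate, uses Proposition~\ref{p.condExp} to see that each block sum $B_t$ is conditionally symmetric and mean zero given the sigma-field $\mathcal F_t$ in the first variable, controls the conditional second and fourth moments through the martingale square function \eqref{e.condSqfn} and the Littlewood--Paley bound \eqref{e.LP}, applies the Paley--Zygmund inequalities to get $\mathbb P(B_t>\tau n/\sqrt q \mid \mathcal F_t)>\tau$ off an exceptional set, and then multiplies these conditional probabilities via Lemma~\ref{l.condIndep}. The decisive input that has no analogue in your scheme is the coincidence estimate Lemma~\ref{l.exp}, $\norm {\textstyle\bigsqcap}_t.\operatorname{exp}(L^{2/3}).\lesssim n^{3/2}/\sqrt q$, which guarantees that the ``collision'' terms you identify as the obstruction are typically negligible compared with $\sigma_t^2\simeq n^2/q$; choosing $q\simeq n^{1/4}$ then yields the $n^{1/8}$ gain. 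If you wish to pursue your closing suggestion of a ``conditional or hybrid Riesz product absorbing controlled coordinate collisions,'' this conditional-expectation mechanism is exactly the existing instance of that idea, and any genuine proof of \eqref{e.signed} would have to do substantially better than it.
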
 

We should emphasize that random selection of the coefficients shows that the power on $ n$ on the right 
is sharp.  Unfortunately, random coefficients are very far from the `hard instances' of the inequality, so 
do not indicate a proof of the conjecture.

The Signed Small Ball Conjecture should be easier, but  even  this special case eludes us. 
To illustrate the difficulty in this question, note that in dimension $ d=2$, each point 
$ x$ in the unit square is in $ n+1$ distinct dyadic rectangles of volume $ 2 ^{-n}$.  Thus, it suffices 
to find a \emph{single} point where all the Haar functions have the same sign.  This we will do explicitly 
in \S~\ref{s.talagrand} below.  

Passing to three dimensions reveals a much harder problem.  Each point  $ x$ in the unit cube is in about $ n ^2 $ 
rectangles of volume $ 2 ^{-n}$, 
but in general we can only achieve a $ n ^{3/2} $ supremum norm.  Thus, the task is to find a single point $ x$ 
where the number of pluses is more than the number of minuses by $ n ^{3/2}$.  In percentage terms this 
represents only a $ n ^{-1/2}$-percent imbalance over equal distribution of signs.  

The main Theorem of this note is Theorem~\ref{t.3} below, 
which gives the best exponent we are aware of in the Signed Small Ball Inequality. The method of proof 
is also the simplest we are aware of.  (In particular, it gives a better result than 
the more complicated argument in \cite{MR2375609}). Perhaps this argument can inspire further progress on this 
intriguing and challenging question. 

The authors thank the anonymous referee whose attention to detail has helped  greater clarity in 
our arguments.

\begin{dedication}  One of us was a PhD student of Walter Philipp, the last of seven students.  Walter  
was very fond of the subject of this note, though the insights he would have into the recent developments 
are lost to us.  As a scientist, he held himself to high standards in all his areas of study.
As a friend, he was faithful, loyal, and took great pleasure in renewing contacts and friendship.  
He is very much missed.  
\end{dedication}

\section{The Two Dimensional Case} \label{s.talagrand}

This next definition is due to Schmidt, refining a definition of Roth.  
Let $\vec r\in \mathbb N^d$ be a partition of $n$, thus $\vec r=(r_1 ,\dotsc, r_d)$, 
where the  $r_j$ are non negative integers and $\abs{ \vec r} \coloneqq \sum _{t=1} ^{d} r_t=n$.  
Denote all such vectors at $ \mathbb H _n$. (`$ \mathbb H $' for `hyperbolic.') 
For vector $ \vec r $, let $ \mathcal R _{\vec r} $ be all dyadic rectangles 
$ R$ such that for each coordinate $ 1\le t \le d$, $ \lvert  R _t\rvert= 2 ^{-r_t} $.

\begin{definition}\label{d.rfunction} 
We call a function $f$ an \emph{$\mathsf r$-function  with parameter $ \vec r$ } if 
\begin{equation}
\label{e.rfunction} f=\sum_{R\in \mathcal R _{\vec r}}
\varepsilon_R\, h_R\,,\qquad \varepsilon_R\in \{\pm1\}\,. 
\end{equation}
We will use $f _{\vec r} $ to denote a generic $\mathsf r$-function.   
A fact used without further comment is that $ f _{\vec r} ^2 \equiv 1$. 
\end{definition}

Note that in the Signed Small Ball Inequality, one is seeking lower bounds on 
sums $ \sum _{\lvert  \vec r\rvert=n } f _{\vec r}$.

\smallskip

There is a trivial proof of the two dimensional Small Ball Inequality. 

\begin{proposition}\label{p.independent}  
The random variables $ f_{(j,n-j)}$, $ 0\le j \le n$ are independent. 
\end{proposition}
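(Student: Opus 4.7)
The plan is to reduce mutual independence of the $\pm 1$-valued variables $f_{(j,n-j)}$ on $([0,1]^{2}, dx)$ to the vanishing of every nontrivial joint moment, and then to reduce that to a simple one-dimensional computation.  First I would record that each $f_{\vec r}$ is $\pm 1$-valued (immediate from $f_{\vec r}^{2}\equiv 1$), and then use the indicator identity
\begin{equation*}
\mathbf 1 _{ \{ f _{(j,n-j)} = \varepsilon _{j}\, \forall j \}}
= \prod _{j=0} ^{n} \frac{1 + \varepsilon _{j} \, f _{(j,n-j)}}{2}
= 2 ^{-(n+1)} \sum _{S \subseteq \{ 0 ,\dotsc, n \}} \Bigl( \prod _{j \in S} \varepsilon _{j} \Bigr) \prod _{j \in S} f _{(j,n-j)}
\end{equation*}
to observe that joint independence (equivalently, the joint probability equals $2^{-(n+1)}$ for every sign pattern) is equivalent to $\int _{[0,1] ^{2}} \prod _{j \in S} f _{(j,n-j)} \, dx = 0$ for every nonempty $S \subseteq \{0, \dotsc, n\}$.

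The second step is the moment computation itself.  Fix distinct $\vec r _{1}, \dotsc, \vec r _{k} \in \mathbb H _{n}$ and expand each $f _{\vec r _{i}} = \sum _{R _{i} \in \mathcal R _{\vec r _{i}}} \varepsilon _{R _{i}} h _{R _{i}}$.  Using the tensor factorization $h _{R}(x _{1}, x _{2}) = h _{R _{1}}(x _{1}) \, h _{R _{2}}(x _{2})$ and Fubini's theorem, each integral $\int _{[0,1] ^{2}} \prod _{i} h _{R _{i}} \, dx$ factors as a product over the two coordinates $t = 1, 2$ of one-dimensional integrals $\int _{0} ^{1} \prod _{i=1} ^{k} h _{(R _{i})_{t}} \, dx _{t}$.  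The crucial observation is that because the $\vec r _{i}$ are distinct partitions of $n$ into two parts, the first coordinates $r _{i,1}$ are pairwise distinct, so the intervals $(R _{i})_{1}$ have pairwise distinct dyadic lengths $2 ^{-r _{i,1}}$ (and similarly in the second coordinate).

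Finally, I would prove the one-dimensional lemma that $\int _{0} ^{1} h _{I _{1}} \cdots h _{I _{k}} \, dx = 0$ whenever $I _{1}, \dotsc, I _{k}$ are dyadic intervals of pairwise distinct lengths ($k \ge 1$).  Any two dyadic intervals are either nested or disjoint: if some pair is disjoint, the product vanishes identically; otherwise the family is totally ordered by inclusion, and letting $I _{\ast}$ be the shortest, every other $h _{I _{j}}$ is constant on $I _{\ast}$, so the product equals $\pm h _{I _{\ast}}$ on $I _{\ast}$ and zero off it, with vanishing integral.  Combined with the previous step, this makes every nontrivial joint moment zero and establishes independence.

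There is no real obstacle in this argument; the only substantive content is the observation that distinct partitions of $n$ into \emph{two} parts force distinct Haar lengths in each coordinate, which triggers the one-dimensional Haar cancellation.  The reason this is special to two dimensions is precisely that $\mathbb H _{n}$ is then parameterized by a single integer: in higher dimensions, distinct $\vec r$ can agree in one coordinate, which would leave the one-dimensional integral in that coordinate nonzero and destroy independence.
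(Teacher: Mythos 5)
Your argument is correct, but it is genuinely different from the one in the paper. You reduce independence of the $\pm1$-valued family to the vanishing of all nontrivial joint moments via the Walsh-type expansion of $\prod_j(1+\varepsilon_j f_{(j,n-j)})/2$, and then kill each moment term by the observation that distinct two-part compositions of $n$ force pairwise distinct dyadic lengths in the first coordinate, so the one-dimensional product of Haar functions integrates to zero (the shortest interval's Haar function survives as a factor on which all the others are constant). All the steps check out; note that the moment-to-independence equivalence does quietly use that each $f_{(j,n-j)}$ is a mean-zero $\pm1$ variable, hence uniform on $\{\pm1\}$, which is immediate here. The paper instead argues through the filtration in the first variable: the sigma-field generated by $f_{(k,n-k)}$, $k<j$, is coarse in $x_1$ (its atoms are rectangles with first side of length $2^{-j}$), and on each slice $S_1\times\{x_2\}$ of such an atom the function $f_{(j,n-j)}$ takes the values $\pm1$ in equal measure, so it is independent of the preceding sigma-field; independence of the whole family follows by induction. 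Your route is more computational and self-contained (it is essentially Haar orthogonality plus Fourier inversion on the hypercube), while the paper's is shorter and, more importantly, introduces the conditional-distribution viewpoint that is the engine of the three-dimensional argument (compare Proposition~\ref{p.condExp} and Lemma~\ref{l.condIndep}). Your closing remark about why the argument is special to $d=2$ identifies exactly the right obstruction, though strictly speaking agreement of two $\vec r$'s in one coordinate only breaks this proof of independence rather than proving dependence.
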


\begin{proof}

The sigma-field generated by the functions  $ \{ f _{ (k,n-k)} \;:\; 0\le k < j\} $ 
consists of dyadic rectangles $ S = S_1 \times S_2$ with $ \lvert  S_1\rvert = 2 ^{-j} $ 
and $ \lvert  S_2\rvert= 2 ^{-n} $. 
On each line segment $ S_1 \times \{x_2\}$, $ f _{(j, n-j)}$ takes the values $ \pm 1$ 
in equal measure, so the proof is finished. 

\end{proof}

We then have 

\begin{proposition}\label{p.2^n}  In the case of two dimensions, 
\begin{equation*}
\mathbb P \Bigl(\sum _{k=0} ^{n} f _{ (k,n-k)} = {n+1} \Bigr)= 2 ^{-n-1}
\end{equation*}

\end{proposition}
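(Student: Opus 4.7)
The plan is to combine Proposition~\ref{p.independent} with the basic distributional fact that every $\mathsf{r}$-function is a fair coin flip on $[0,1]^2$. Because the sum $\sum_{k=0}^{n} f_{(k,n-k)}$ consists of $n+1$ summands each taking values in $\{\pm 1\}$, it attains its maximum value $n+1$ \emph{if and only if} every summand equals $+1$. Thus the task reduces to computing $\mathbb{P}\bigl(f_{(k,n-k)} = +1 \text{ for all } k=0,1,\dotsc,n\bigr)$.

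First I would record the marginal distribution of an $\mathsf{r}$-function. By definition $ f_{\vec r}$ is a signed sum of Haar functions $h_R$, each of which has integral zero; hence $\int_{[0,1]^2} f_{\vec r}\, dx = 0$. Combined with the fact that $f_{\vec r}^2 \equiv 1$, this forces
\begin{equation*}
\Abs{\{ x \in [0,1]^2 \mid f_{\vec r}(x) = +1 \}} = \Abs{\{ x \in [0,1]^2 \mid f_{\vec r}(x) = -1 \}} = \tfrac{1}{2}\,.
\end{equation*}
In particular $\mathbb{P}(f_{(k,n-k)} = +1) = \tfrac{1}{2}$ for every $0 \le k \le n$.

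Next I would invoke Proposition~\ref{p.independent}, which asserts that the collection $\{ f_{(k,n-k)} \mid 0 \le k \le n \}$ is independent. Consequently
\begin{equation*}
\mathbb{P}\Bigl( \textstyle\sum_{k=0}^n f_{(k,n-k)} = n+1 \Bigr)
= \mathbb{P}\Bigl( \bigcap_{k=0}^n \{ f_{(k,n-k)} = +1 \} \Bigr)
= \prod_{k=0}^{n} \tfrac{1}{2} = 2^{-n-1}\,,
\end{equation*}
which is the desired identity.

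There is no real obstacle here: the argument is essentially a bookkeeping observation once Proposition~\ref{p.independent} is in hand. The only point deserving a line of justification is the symmetry $|\{f_{\vec r}=+1\}| = \tfrac{1}{2}$, which is immediate from the zero-mean and $\pm 1$-valued properties of $f_{\vec r}$.
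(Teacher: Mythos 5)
Your proof is correct and follows the same route as the paper: the sum equals $n+1$ exactly when every $f_{(k,n-k)}=+1$, and then independence (Proposition~\ref{p.independent}) together with the fact that each $\mathsf r$-function is mean-zero and $\pm1$-valued gives the product $2^{-n-1}$. The paper states this in one line; your write-up merely makes explicit the marginal computation $\mathbb P(f_{(k,n-k)}=+1)=\tfrac12$, which is a reasonable addition.
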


\begin{proof}
Note that 
\begin{equation*}
\mathbb P \Bigl(\sum _{k=0} ^{n} f _{ (k,n-k)} = {n+1} \Bigr)= 
\mathbb P \bigl( f _{ (k,n-k)}=1\ \forall 0\le k \le n \bigr)= 2 ^{-n-1}\,. 
\end{equation*}
\end{proof}

It is our goal to give a caricature of this arugment in three dimensions.  See \S\ref{s.heuristics} 
for a discussion.

\section{Elementary Lemmas} 

We recall some elementary Lemmas that we will need in our three dimensional proof.  

\begin{pz}
Suppose that $ Z$ is a positive random variable with $ \mathbb E Z  = \mu _1 $, $ \mathbb E Z ^{2}
= \mu ^{2} _{2}$. Then, 
\begin{equation} \label{e.pz}
\mathbb P (   Z  \ge \mu _1/2    )\ge \tfrac 14 \frac {\mu _2  ^2 } { \mu _1 ^{2}} \,. 
\end{equation}
 \end{pz} 

\begin{proof}
\begin{align*}
\mu _1 = \mathbb E Z 
&= \mathbb E Z  \mathbf 1_{ Z  < \mu _1/2} + \mathbb E Z  \mathbf 1_{Z \ge \mu _1/2} 
\\
& \le \mu _1/2 +  \mu _2 \mathbb P (   Z\ge \mu _1/2 ) ^{1/2}  
\end{align*}
Now solve for $ \mathbb P (   Z\ge \mu _1/2 )$.  
\end{proof}

\begin{pz2}
For all $ \rho _1>1$ there is a $ \rho _2>0$ so   that for all random variables $Z  $ which 
satisfy 
\begin{equation} \label{e.pz2}
\mathbb E Z=0\,, \qquad  \norm Z.2. \le \norm Z . 4. \le \rho _1 \norm Z.2.  
\end{equation}
we have the inequality $ \mathbb P (Z > \rho _2 \norm Z.2.)> \rho _2 $. 
\end{pz2}

\begin{proof}
Let $ Z_+ \coloneqq   Z \mathbf 1_{ Z>0} $ and $ Z_- \coloneqq -  Z \mathbf 1_{Z<0}$, so that $ Z= Z_+-Z_- $. 
Note that $ \mathbb E Z=0$ forces $ \mathbb E Z_+=\mathbb E Z_-$.  And, 
\begin{align*}
\sigma _2 ^2  &\coloneqq  \mathbb E Z ^2 = \mathbb E Z_+ ^2 + \mathbb E Z_- ^2 \,, 
\\
\sigma _4 ^{4} & \coloneqq \mathbb E Z ^{4}= \mathbb E Z_+ ^4 + \mathbb E Z_- ^4\,.
\end{align*}

Suppose that the conclusion is not true.  Namely $ \mathbb P (Z > \rho _2 \sigma _2) < \rho _2 $ 
for a very small $ \rho _2$.  It follows that  
\begin{align*}
\mathbb E Z_+  &\le 
\mathbb E Z_+ \mathbf 1_{ Z_+ \leq \rho _2 \sigma _2} + \mathbb E Z_+ \mathbf 1_{ Z_+ > \rho _2 \sigma _2}
\\ & \le 
\rho _2 \sigma _2 + \mathbb P (Z > \rho _2 \sigma _2) ^{1/2} \sigma _2  \le 
2 \rho _2 ^{1/2} \sigma _2\,, 
\end{align*}
for $ \rho _2<1$.  Hence $ \mathbb E  Z_- = \mathbb E Z_+ \le 2 \rho _2 ^{1/2} \sigma _2$. 
It is this condition that we will contradict below.  

\smallskip 

We also have 
\begin{align*}
\mathbb E Z_+ ^2 & \le 
\mathbb E Z_+ ^2 \mathbf 1_ { \{ Z_+  \leq \rho _2 \sigma _2\}} + \mathbb E Z_+ ^2  \mathbf 1_{ \{ Z_+ > \rho _2 \sigma _2\}}
\\ & \le 
\rho _2 ^2 \sigma _2 ^2 + \rho _2 ^{1/2} \sigma _4 ^2 
\\
& \le 2 \rho _2 ^{1/2} \rho _1 ^2 \sigma _2 ^2 \,. 
\end{align*}  
So for $ \rho _2 < (4 \rho _1) ^{-4}$, we have $ \mathbb E Z_+ ^2 \le \tfrac 12 \sigma _2 ^2 $.  

It follows that we have $ \mathbb E Z_- ^2 \ge \tfrac 12 \sigma _2 ^2 $, and $ \mathbb E Z_- ^{4} \le 
\rho _1 \sigma _2 ^{4}$.  So by \eqref{e.pz}, we have 
\begin{equation*}
\mathbb P (Z_-> \rho _3 \sigma _2) > \rho _3
\end{equation*}
where $ \rho _3$ is only a function of $ \rho _1$.  But this contradicts $ \mathbb E Z_-\le 2 \rho _2 ^{1/2} \sigma _2$,
for small $ \rho _2$, so finishes our proof.
\end{proof}

The Paley-Zygmund inequalities require a higher moment, and in application we find it convenient to use 
the Littlewood-Paley inequalities to control this higher moment.  
Let  $  \mathcal F_0, \mathcal F _1 ,\dotsc,  \mathcal F _{T}$ 
a sequence of increasing sigma-fields generated by dyadic intervals, 
and let $ d _{t} $, $ 1\le t\le  T$ be a martingale difference sequence, 
namely $ \mathbb E (d_t \;:\; \mathcal F _{t-1})=0$ for all $ t=1,2 ,\dotsc, T $.  Set $ f= \sum _{t=1} ^{T} d_t $. 
The martingale square function of $ f$ is $ \operatorname S (f) ^2 \coloneqq \sum_{t=1} ^{T} d_t ^2 $. 
The instance of the Littlewood-Paley inequalities we need are: 

\begin{lemma}\label{l.lp}  With the notation above, suppose that we have in addition 
that the distribution of $ d_t $ is conditionally symmetric given $ \mathcal F_{t-1}$.  
By this we mean that on each atom  $ A$ of $ \mathcal F _{t-1}$, the 
 the distribution of $  d_t \mathbf 1_{A}$ is equal to that 
of $ -  d_t \mathbf 1_{A}$.  Then, we have 
\begin{equation} \label{e.LP}
\norm f . 4.  \simeq \norm \operatorname S (f). 4. \,. 
\end{equation}
\end{lemma}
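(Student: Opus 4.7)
The plan is to reduce this square function comparison to a pointwise Khintchine identity by means of Rademacher randomization, a maneuver that the hypothesis of conditional symmetry is tailor-made for.

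First I would introduce an i.i.d.\ Rademacher sequence $(\epsilon_t)_{t=1}^{T}$ defined on an auxiliary probability space and independent of $(d_t)$. The crux of the proof is the claim that the two random vectors $(d_1,\dots,d_T)$ and $(\epsilon_1 d_1,\dots,\epsilon_T d_T)$ have the same joint distribution. I would establish this by induction on $t$: conditionally on $\mathcal F_{t-1}$, the distribution of $d_t$ is symmetric by hypothesis, so multiplying by an extra independent $\pm 1$ symbol leaves the conditional law unchanged. Telescoping the filtration then identifies the two full joint laws. As an immediate consequence, $f=\sum_t d_t$ is equidistributed with $g\coloneqq \sum_t \epsilon_t d_t$, and in particular $\norm f.4. = \norm g.4.$.

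Next I would fix a realization of $(d_t)$ and compute the expectation over $\epsilon$ alone, using that $\mathbb E[\epsilon_{t_1}\epsilon_{t_2}\epsilon_{t_3}\epsilon_{t_4}]$ equals $1$ when each index appears an even number of times and $0$ otherwise. A straightforward multinomial count gives the pointwise identity
\begin{equation*}
\mathbb E_{\epsilon} \Bigl|\sum_t \epsilon_t d_t \Bigr|^{4} \;=\; 3 \Bigl(\sum_t d_t^{2}\Bigr)^{\!2} - 2 \sum_t d_t^{4}.
\end{equation*}
Since $\sum_t d_t^{4} \le (\sum_t d_t^{2})^{2}$, the right-hand side is sandwiched between $(\sum d_t^{2})^{2}$ and $3(\sum d_t^{2})^{2}$, i.e., it is comparable to $\operatorname S(f)^{4}$ with explicit absolute constants.

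Finally, integrating this identity in the remaining variables via Fubini yields $\norm f.4.^{4} = \norm g.4.^{4} \simeq \mathbb E\,\operatorname S(f)^{4} = \norm \operatorname S(f).4.^{4}$, which is exactly \eqref{e.LP}. The only non-routine step, and the one I expect to be the main obstacle, is the equidistribution claim for the randomized vector; everything else is the standard fourth-moment Khintchine identity. In the intended application to $\mathsf r$-functions, the conditional-symmetry hypothesis is automatic, since on each atom of $\mathcal F_{t-1}$ the next $\mathsf r$-function takes the values $\pm 1$ in equal measure.
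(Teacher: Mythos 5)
Your Khintchine reduction is fine once the randomization step is granted, but the step you yourself single out as the crux --- that $(d_1,\dotsc,d_T)$ and $(\epsilon_1 d_1,\dotsc,\epsilon_T d_T)$ have the same joint distribution --- is false under the lemma's hypothesis, and the induction you sketch does not prove it. Conditional symmetry only gives equality of the \emph{one-step} conditional laws: given $\mathcal F_{t-1}$ and the earlier signs, $\epsilon_t d_t$ and $d_t$ are equidistributed. Sequences with matching one-step conditional laws (tangent sequences) need not have equal joint laws, because replacing $d_t$ by $\epsilon_t d_t$ erases the dependence of the \emph{later} increments on the sign of $d_t$, which the filtration is allowed to carry. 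A concrete counterexample satisfying the lemma's hypotheses: let $d_1=\pm1$ fairly, $\mathcal F_1=\sigma(d_1)$, and let $d_2=\pm1$ fairly on the atom $\{d_1=1\}$ and $d_2=\pm2$ fairly on the atom $\{d_1=-1\}$. Then
\begin{equation*}
\mathbb P\bigl(\epsilon_1 d_1=1,\ \epsilon_2 d_2=2\bigr)=\tfrac18\,,\qquad \mathbb P\bigl(d_1=1,\ d_2=2\bigr)=0\,,
\end{equation*}
and even the sums fail to be equidistributed, since $\mathbb P(d_1+d_2=-3)=\tfrac14$ while $\mathbb P(\epsilon_1d_1+\epsilon_2d_2=-3)=\tfrac18$. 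So ``telescoping the filtration'' cannot identify the two laws, and the equality $\lVert f\rVert_4=\lVert\sum_t\epsilon_t d_t\rVert_4$ cannot be obtained as a consequence of equidistribution.

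What is true, and is all you actually need, is the weaker statement that the two \emph{fourth moments} agree (they do in the example above: both equal $24.5$). But proving that requires expanding $\mathbb E f^4=\sum_{t_1,t_2,t_3,t_4}\mathbb E\, d_{t_1}d_{t_2}d_{t_3}d_{t_4}$ and checking that every term with an unpaired index vanishes: the martingale property kills the terms whose largest index appears once, and conditional symmetry kills the terms of the shape $\mathbb E\, d_{t_1}d_{t_2}^3$ and $\mathbb E\, d_{t_1}d_{t_2}d_{t_3}^2$. Once that expansion is done, the surviving paired terms are directly comparable to $\mathbb E\,\operatorname S(f)^4$ and the Rademacher detour is superfluous. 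In other words, the only available repair of your argument is exactly this term-by-term computation, which is the paper's proof; the randomization step cannot be used to bypass it.
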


\begin{proof}
The case of the Littlewood-Paley for even integers can be proved by expansion of the integral, an argument 
that goes back many decades, and our assumption of being conditionally symmetric is added just to 
simplify this proof. Thus, 
\begin{align*}
\norm f. 4. ^{4}  = \sum _{ 1\le t_1, t_2, t_3, t_4\le T} \mathbb E \prod _{u=1} ^{4} d _{t_u} \,.  
\end{align*}
We claim that unless the integers $ 1\le t_1, t_2, t_3, t_4\le T$ occur in pairs of 
equal integers, the expectation on the right above is zero.   This claim shows that 
\begin{equation*}
\norm f. 4. ^{4}  = \sum _{ 1\le t_1, t_2\le T} \mathbb E  d _{t_1} ^2 \cdot d _{t_2} ^2  \,.  
\end{equation*}
It is easy to see that this proves the Lemma, namely we would have 
\begin{equation*}
\norm \operatorname S (f). 4. ^{4} \le 
\norm f. 4. ^{4} \le \tfrac {4!} 2  \norm \operatorname S (f). 4. ^{4}  \,. 
\end{equation*}

\smallskip 

Let us suppose $ t_1\le t_2 \le t_3 \le t_4$.  If we have $ t_3$ strictly less than $ t_4$, then 
\begin{equation*}
\mathbb E \prod _{u=1} ^{4} d _{t_u} = 
\mathbb E \prod _{u=1} ^{3} d _{t_u} \cdot \mathbb E (d _{t_4} \;:\; \mathcal F _{t_3})=0\,. 
\end{equation*}
If we have $ t_1 < t_2=t_3=t_4$, then by conditional symmetry, $ \mathbb E (d _{t_2} ^{3} \;:\; \mathcal F_{t_1})=0$, 
and so we have 
\begin{equation*}
\mathbb E \prod _{u=1} ^{4} d _{t_u} = 
\mathbb E  d _{t_1} \cdot \mathbb E (d^3 _{t_2} \;:\; \mathcal F _{t_1})=0\,. 
\end{equation*}
If we have $ t_1<t_2<t_3=t_4$, the conditional symmetry again implies that 
$ \mathbb E (d _{t_2}  \cdot d _{t_3}^{2} \;:\; \mathcal F_{t_1})=0$, so that 
\begin{equation*}
\mathbb E \prod _{u=1} ^{4} d _{t_u} = 
\mathbb E  d _{t_1} \cdot \mathbb E ( d _{t_2} \cdot d _{t_3} ^2    \;:\; \mathcal F _{t_1})=0\,. 
\end{equation*}
Thus, the claim is proved. 

\end{proof}

We finish this section with an elementary, slightly technical,  Lemma. 

\begin{lemma}\label{l.condIndep} Let  $  \mathcal F_0, \mathcal F _1 ,\dotsc, \mathcal F_{q} $ 
a sequence of increasing sigma-fields. Let $ A_1 ,\dotsc, A_q$ be events, with $ A_t \in \mathcal F_t $.
Assume that for some $ 0<\gamma <1$, 
\begin{equation}\label{e.E>} 
\mathbb E \bigl( \mathbf 1_{A_t} \;:\; \mathcal F_{t-1} \bigr)\ge \gamma \,, \qquad 1\le t \le q 
\end{equation}
We then have that 
\begin{equation}\label{e.F>}
\mathbb P \Bigl( \bigcap _{t=1} ^{q} A_t \Bigr)\ge \gamma ^{q} \,. 
\end{equation}
More generally, assume that 
\begin{equation}\label{e.E>>}
\mathbb P \Bigl( \bigcup _{t=1} ^{q} \bigl\{ \mathbb E \bigl( \mathbf 1_{A_t} \;:\; \mathcal F_{t-1} \bigr)\le \gamma \bigr\}  \Bigr)
\le \tfrac 12 \cdot \gamma ^{q}\,. 
\end{equation}
Then, 
\begin{equation}\label{e.F>>}
\mathbb P \Bigl( \bigcap _{t=1} ^{q} A_t \Bigr)\ge \tfrac 12 \cdot \gamma ^{q} \,. 
\end{equation}
\end{lemma}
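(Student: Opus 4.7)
For the first part I would proceed by a straightforward induction on $q$ using the tower property. Writing
\begin{equation*}
\mathbb P \Bigl( \bigcap _{t=1} ^{q} A_t \Bigr) = \mathbb E \Bigl[ \prod _{t=1} ^{q-1} \mathbf 1 _{A_t} \cdot \mathbb E \bigl( \mathbf 1 _{A_q} \;:\; \mathcal F _{q-1} \bigr) \Bigr],
\end{equation*}
and using that $A_t \in \mathcal F_t \subset \mathcal F _{q-1}$ for $t\le q-1$ together with the assumed lower bound $\mathbb E(\mathbf 1 _{A_q} \;:\; \mathcal F _{q-1}) \ge \gamma$, we peel off one factor of $\gamma$. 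Iterating gives \eqref{e.F>}.

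For the second, more flexible version, the plan is to \emph{enlarge} each event $A_t$ just on the bad $\mathcal F _{t-1}$-set where the conditional probability drops, so that Part 1 applies to the enlarged events. Concretely, set
\begin{equation*}
B_t \eqdef \bigl\{ \mathbb E \bigl( \mathbf 1 _{A_t} \;:\; \mathcal F _{t-1} \bigr) \le \gamma \bigr\} \in \mathcal F _{t-1}, \qquad A_t' \eqdef A_t \cup B_t \in \mathcal F_t,
\end{equation*}
and let $B \eqdef \bigcup _{t=1} ^{q} B_t$, the event whose probability is controlled by hypothesis \eqref{e.E>>}. On the complement of $B_t$ the conditional expectation of $\mathbf 1 _{A_t'}$ equals that of $\mathbf 1 _{A_t}$, which exceeds $\gamma$; on $B_t$ itself $\mathbf 1 _{A_t'} = 1$. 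Thus in every case $\mathbb E ( \mathbf 1 _{A_t'} \;:\; \mathcal F _{t-1} ) \ge \gamma$, and the first part yields $\mathbb P ( \bigcap _{t=1} ^{q} A_t' ) \ge \gamma ^q$.

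It remains to remove the enlargement. The key observation is that $A_t' \setminus B \subseteq A_t$, since $A_t' \setminus B_t \subseteq A_t$ by construction. Hence
\begin{equation*}
\bigcap _{t=1} ^{q} A_t \supseteq \Bigl( \bigcap _{t=1} ^{q} A_t' \Bigr) \setminus B,
\end{equation*}
and a one-line union bound gives
\begin{equation*}
\mathbb P \Bigl( \bigcap _{t=1} ^{q} A_t \Bigr) \ge \gamma ^q - \mathbb P (B) \ge \tfrac 12 \gamma ^q,
\end{equation*}
which is \eqref{e.F>>}. There is no real obstacle here; the only subtle point is making sure the enlarging sets $B_t$ belong to $\mathcal F _{t-1}$ so that $A_t' \in \mathcal F_t$, which is automatic from the definition of conditional expectation.
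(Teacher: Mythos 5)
Your proposal is correct and follows essentially the same route as the paper: the tower-property induction for \eqref{e.F>}, and for \eqref{e.F>>} the enlargement $A_t' = A_t \cup B_t$ with the bad $\mathcal F_{t-1}$-sets, followed by removing $\bigcup_t B_t$ via a union bound. If anything, your inclusion $\bigcap_t A_t \supseteq \bigl(\bigcap_t A_t'\bigr)\setminus B$ states the final subtraction step slightly more carefully than the paper's version, but the argument is the same.
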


\begin{proof}
To prove \eqref{e.F>}, note that by assumption \eqref{e.E>}, and backwards induction we have 
\begin{align*}
\mathbb P \Bigl( \bigcap _{t=1} ^{q} A_t \Bigr) & =  
\mathbb E \prod  _{t=1} ^{q} \mathbf 1_{A_t} 
\\
& = \mathbb E \prod  _{t=1} ^{q-1} \mathbf 1_{A_t}  \times \mathbb E \bigl(\mathbf 1_{A_q} \;:\; \mathcal F_{q-1} \bigr)
\\
& \ge \gamma \mathbb E \prod  _{t=1} ^{q-1} \mathbf 1_{A_t} 
\\
& \;\; \vdots 
\\
&\ge \gamma ^{q} \,. 
\end{align*}

\medskip 
To prove \eqref{e.F>>}, let us consider an alternate sequence of events.  Define  
\begin{equation*}
\beta _t \coloneqq  
\bigl\{ \mathbb E ( {A_t} \;:\; \mathcal F_{t-1} )\le \gamma \bigr\} \,. 
\end{equation*}
These are the `bad' events.  Now define $ \widetilde A_t \coloneqq A_t \cup \beta _t $. 
By construction, the sets $ \widetilde A_t$ satisfy \eqref{e.E>}. Hence, we have by \eqref{e.F>}, 
\begin{equation*}
\mathbb P \Bigl( \bigcap _{t=1} ^{q} \widetilde A_t \Bigr)\ge \gamma ^{q} \,. 
\end{equation*}
But, now note that by \eqref{e.E>>}, 
\begin{align*}
\mathbb P \Bigl( \bigcap _{t=1} ^{q} A_t \Bigr)
& = \mathbb P \Bigl( \bigcap _{t=1} ^{q} \widetilde A_t \Bigr) - \mathbb P \Bigl(\bigcup _{t=1} ^{q} \beta _t \Bigr)
\\
& \ge \gamma ^{q}-  \tfrac 12 \cdot \gamma ^{q} \ge  \tfrac 12 \cdot \gamma ^{q} \,. 
\end{align*}

\end{proof}

\section{Conditional Expectation Approach in Three Dimensions} 

This is the main result of this note.

\begin{theorem}\label{t.3} For $ \lvert  a_R\rvert=1 $ for all $ R$, we have the estimate 
\begin{equation*}
\NOrm \sum _{\substack{\lvert  R\rvert= 2 ^{-n}\\ \lvert  R_1\rvert\ge 2 ^{-n/2} } } a_R \, h_R . L ^{\infty }. \gtrsim n ^{9/8}\,. 
\end{equation*}
We restrict the sum to those dyadic rectangles whose first side has the lower bound $ \lvert  R_1\rvert\ge 2 ^{-n/2} $. 
\end{theorem}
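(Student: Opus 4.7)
The plan is to prove the theorem by a multi-scale block decomposition in the $x_1$-direction, combined with an iterated conditional application of the Second Paley-Zygmund inequality, controlled by Lemma~\ref{l.lp} and assembled via Lemma~\ref{l.condIndep}.

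Begin by writing $F := \sum_{r_1=0}^{\lfloor n/2 \rfloor} F_{r_1}$, where $F_{r_1} := \sum_{r_2+r_3=n-r_1} f_{(r_1,r_2,r_3)}$. Expanding in the first coordinate, $F_{r_1}(x) = \sum_{|R_1|=2^{-r_1}} h_{R_1}(x_1)\, g_{R_1}(x_2,x_3)$, where each $g_{R_1}$ is a two-dimensional signed small-ball sum at scale $n-r_1 \geq n/2$; the restriction $r_1 \leq n/2$ is essential here, as it guarantees each slice carries substantial 2D randomness. Orthogonality of the Haar system gives $\|F_{r_1}\|_2^2 = n-r_1+1$.

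Next, choose a block-size parameter $K$ (to be optimized) and partition $\{0,1,\dots,\lfloor n/2\rfloor\}$ into $q \asymp n/K$ contiguous blocks $B_1,\dots,B_q$ of length $K$. Set $S_j := \sum_{r_1 \in B_j} F_{r_1}$ and let $\mathcal G_j$ be the sigma-field generated by the first $jK$ binary digits of $x_1$ together with $(x_2,x_3)$. A short computation shows $S_j$ is $\mathcal G_j$-measurable, $\mathbb E(S_j \mid \mathcal G_{j-1}) = 0$, and each increment $F_{r_1}$ is conditionally symmetric with respect to the finer filtration that refines the bits of $x_1$ one at a time; thus $(F_{r_1})_{r_1 \in B_j}$ is a conditionally symmetric martingale-difference sequence within the $j$-th block. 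Pairwise orthogonality across distinct $r_1$ yields $\|S_j\|_2^2 \asymp Kn$, and Lemma~\ref{l.lp} applied block-wise, together with the pointwise identity $\sum_R \mathbf 1_R \equiv Kn$ for the rectangles contributing to $S_j$, controls the unconditional fourth moment, giving $\|S_j\|_4 \asymp \|S_j\|_2$.

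The central technical step is to upgrade these global moment bounds to conditional ones holding on most atoms of $\mathcal G_{j-1}$: one must show $\mathbb E(S_j^2 \mid \mathcal G_{j-1}) \asymp Kn$ and $\mathbb E(S_j^4 \mid \mathcal G_{j-1}) \lesssim (Kn)^2$ outside a set of $\mathcal G_{j-1}$-probability at most $\tfrac{1}{2}\gamma^q$, where $\gamma$ is the constant from the Second Paley-Zygmund inequality. This is the main obstacle: it requires a concentration argument for the conditional moments of $S_j$ and constrains the admissible range of $K$ (the larger $K$ is, the better the concentration, but the worse the final bound). On the good atoms, the Second Paley-Zygmund inequality then gives $\mathbb P(S_j > c\sqrt{Kn} \mid \mathcal G_{j-1}) > \gamma$, and Lemma~\ref{l.condIndep} produces a point $x \in [0,1]^3$ at which $S_j(x) > c\sqrt{Kn}$ for every $j$, so that $F(x) \gtrsim q\sqrt{Kn} \asymp n\sqrt{n/K}$. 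The choice $K = n^{3/4}$ balances the concentration loss against this lower bound and yields $\|F\|_\infty \gtrsim n^{9/8}$.
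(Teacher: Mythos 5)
Your architecture is exactly that of the paper's proof: blocks in the first coordinate of length $K=n/q$, a conditional second Paley--Zygmund estimate on each block, assembly via Lemma~\ref{l.condIndep}, and your choice $K=n^{3/4}$ is the paper's $q\simeq n^{1/4}$. The problem is that the step you yourself label ``the main obstacle'' --- the conditional moment bounds on most atoms --- is precisely the heart of the theorem, and it is left unproved; worse, the one concrete mechanism you offer in its place is wrong. The martingale square function of $S_j$ in the $x_1$-variable, $\sum_{r_1\in B_j}F_{r_1}^2$, is \emph{not} pointwise $\asymp Kn$: expanding each $F_{r_1}^2$ gives the diagonal count $\#\{\vec r: r_1\in B_j\}\asymp Kn$ (this is all that the identity ``$\sum_R\mathbf 1_R\equiv Kn$'' captures) \emph{plus} the off-diagonal coincidence sum $ {\textstyle\bigsqcap}_j=\sum_{\vec r\neq\vec s,\ r_1=s_1} f_{\vec r}f_{\vec s}$, as in \eqref{e.cap} and Proposition~\ref{p.sqfn}. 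Controlling ${\textstyle\bigsqcap}_j$ is the genuinely hard part of the argument, and it is not a pointwise identity: the paper needs the coincidence estimate Lemma~\ref{l.exp}, $\bigl\lVert{\textstyle\bigsqcap}_t\bigr\rVert_{\exp(L^{2/3})}\lesssim n^{3/2}/\sqrt q$, imported from \cite{MR2414745}, and nothing in your write-up supplies, cites, or replaces it. Without it you get neither $\|S_j\|_4\lesssim\|S_j\|_2$ in the form you need, nor the conditional bounds $\mathbb E(S_j^2:\mathcal G_{j-1})\asymp Kn$, $\mathbb E(S_j^4:\mathcal G_{j-1})\lesssim (Kn)^2$ off a small set.

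Moreover, the exponent $9/8$ is not the outcome of a generic ``balance'': it is dictated by the \emph{exponential-Orlicz} (not $L^2$ or $L^4$) strength of Lemma~\ref{l.exp}. In the paper, Chebyshev against the $\exp(L^{2/3})$ norm shows the bad event $\{\mathbb E({\textstyle\bigsqcap}_t^2:\mathcal F_t)^{1/2}\ge\tau n^2/q\}$ has probability $\lesssim\exp(-c(n/q)^{1/3})$, and the requirement of Lemma~\ref{l.condIndep}, namely $\exp(-c_1(n/q)^{1/3})\le\tfrac12\tau^{q/2}$ (via the exceptional set \eqref{e.E} in $(x_2,x_3)$), is exactly what forces $q\lesssim n^{1/4}$, i.e.\ your $K\gtrsim n^{3/4}$, and hence the bound $q\sqrt{Kn}\asymp n^{9/8}$. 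So your statement that ``$K=n^{3/4}$ balances the concentration loss'' presupposes a quantitative concentration estimate that your proposal never establishes. To close the gap along your (and the paper's) lines you need: the conditional square-function identity of Proposition~\ref{p.sqfn}, the coincidence estimate Lemma~\ref{l.exp}, and the exceptional-set computation following \eqref{e.E}; with those in hand the rest of your outline matches the published argument.
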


Heuristics for our proof are given in the next section.  
The restriction on the first side lengths of the rectangles is natural from the point of view of our proof, 
in which the first coordinate plays a distinguished role.  Namely, if we hold the first side length fixed, 
we want the corresponding sum over $ R$ to be suitably generic.   
Let $ 1\ll q \ll n$ be integers.  The integer $ q $ will be taken to be $ q \simeq  n ^{1/4}$.  Our `gain over 
average case' estimate will be $ \sqrt q \simeq  n ^{1/8}$.  While this is a long way from $ n ^{1/2}$, it is 
much better than the  explicit gain of $ 1/24$ in \cite{MR2375609}.

\smallskip 
We begin the proof. 
Let $ \mathcal F _t$ be the sigma field generated by dyadic intervals in [0,1] with 
$ \lvert  I\rvert = 2 ^{- \lfloor t n/q\rfloor} $, for $ 1\le t \le \tfrac 12 q $.  
Let $ \mathbb I _{t} \coloneqq \{ \vec r \;:\; (t-1)n/q\le r_1 < tn/q\}$. Note that the size $ \# \mathbb I _{t} \approx n^2 / q$.  
Let $ f _{\vec r}$ be the $ \mathsf r$-functions specified by the choice of signs in Theorem~\ref{t.3}.  
Here is a basic observation.  

\begin{proposition}\label{p.condExp}  Let $ I\in \mathcal F _t$. 
The distribution of $ \{ f _{\vec r} \;:\; \vec r\in \mathbb I _t\}$ restricted to the set 
$ I \times [0,1] ^2 $ with normalized Lebesgue measure is that of
\begin{equation*}
\{ f _{\vec s} \;:\;  \lvert  \vec s\rvert=n-\lfloor t n/q\rfloor \,,\, 0\le s_1< n/q\} \,,  
\end{equation*}
where the $ f _{\vec s}$ are some  $ \mathsf r$-functions.
The exact specification of this collection depends upon the atom in $ \mathcal F_t$.  
\end{proposition}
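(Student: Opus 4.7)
The plan is to unwind $f_{\vec r}$ on the slice $I\times[0,1]^2$, push it forward to $[0,1]^3$ by the canonical measure-preserving rescaling $\psi_I\otimes\mathrm{id}_{[0,1]^2}$ (where $\psi_I\colon I\to[0,1]$ is the affine bijection of the atom onto the unit interval), and identify the result as an $\mathsf{r}$-function with a parameter vector $\vec s$ in the advertised range.

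First, I would fix $I\in\mathcal F_t$ and $\vec r\in\mathbb I_t$, and split $f_{\vec r}=\sum_{R}\varepsilon_R h_R$ according to the position of $R_1$ relative to $I$. Since $\vec r\in\mathbb I_t$ forces $r_1<tn/q$, we have $\lvert R_1\rvert\ge\lvert I\rvert$, so each $R_1$ appearing in the sum is either disjoint from $I$ (and contributes nothing on the slice) or contains $I$, in which case $R_1$ is the unique dyadic ancestor of $I$ of scale $2^{-r_1}$, which I denote $R_1^{\star}$. Hence on the slice
\begin{equation*}
f_{\vec r}\big|_{I\times[0,1]^2}(x_1,x_2,x_3)=h_{R_1^{\star}}(x_1)\sum_{R_2,R_3}\varepsilon_{R_1^{\star}\times R_2\times R_3}\,h_{R_2}(x_2)\,h_{R_3}(x_3),
\end{equation*}
the product of a one-dimensional Haar factor in $x_1$ with a two-dimensional $\mathsf r$-function in $(x_2,x_3)$ of parameter $(r_2,r_3)$.

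Next, I would apply $\psi_I\otimes\mathrm{id}$. The two-dimensional factor is untouched, while $h_{R_1^{\star}}\big|_I$ pulls back to a one-dimensional Haar function $h_J$ on a dyadic subinterval $J\subseteq[0,1]$; a direct count of the relative position of $I$ inside $R_1^{\star}$ gives the scale $\lvert J\rvert=2^{-s_1}$ with $s_1\in[0,n/q)$ as $r_1$ ranges over $[(t-1)n/q,tn/q)$. The push-forward is therefore an $\mathsf r$-function on $[0,1]^3$ with parameter $\vec s=(s_1,r_2,r_3)$, of total $\lvert\vec s\rvert=n-\lfloor tn/q\rfloor$, whose $\pm 1$ coefficients are inherited from the original $\varepsilon_R$'s together with the sign of $h_{R_1^{\star}}$ on $I$.

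Finally, since $\psi_I\otimes\mathrm{id}$ preserves measure between $(I\times[0,1]^2,\,\text{normalized Lebesgue})$ and $([0,1]^3,\,\text{Lebesgue})$, the joint distribution of $\{f_{\vec r}\big|_{I\times[0,1]^2}\}_{\vec r\in\mathbb I_t}$ is identical to the joint distribution of the resulting family of $\mathsf r$-functions on $[0,1]^3$, which is exactly the content of the proposition. The only substantive step is bookkeeping: tracking the correspondence $\vec r\leftrightarrow\vec s$ across the admissible range and recording how the sign assignments vary with $I$—precisely the dependence on the atom highlighted in the concluding sentence of the proposition. No analytic input beyond the definition of the Haar functions and the change-of-variable formula is needed.
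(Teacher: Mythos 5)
Your argument breaks at the step where you claim that $h_{R_1^{\star}}$ restricted to the atom ``pulls back to a one-dimensional Haar function $h_J$'' of scale $2^{-s_1}$ with $s_1\in[0,n/q)$. Under the containment you set up ($\lvert R_1\rvert\ge\lvert I\rvert$, so $R_1^{\star}\supseteq I$), the atom $I$ lies entirely inside the left or the right half of $R_1^{\star}$ whenever $R_1^{\star}\neq I$, so $h_{R_1^{\star}}$ is \emph{constant} $\pm1$ on $I$; its image under the rescaling $\psi_I$ is a constant on $[0,1]$, not a Haar function, and the ``relative position of $I$ inside $R_1^{\star}$'' determines only that sign, never a scale. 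Your own bookkeeping signals the problem: $\lvert\vec s\rvert=s_1+r_2+r_3=n-\lfloor tn/q\rfloor$ together with $r_2+r_3=n-r_1$ forces $s_1=r_1-\lfloor tn/q\rfloor\le 0$, contradicting $s_1\in[0,n/q)$ except at the endpoint. Moreover, under your reading the conclusion is actually false, not merely unproved: take all $a_R=+1$; then for two vectors of the block with the same $(r_2,r_3)$ and different $r_1$, the restrictions to $I\times[0,1]^2$ agree up to a deterministic sign, hence are perfectly correlated, whereas any two $\mathsf r$-functions with distinct parameters are orthogonal (their product has mean zero). So a family in which the first sides contain the atom cannot have the joint law asserted in the Proposition.

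What the Proposition needs, and what the paper's short proof does, is the opposite containment: the atom must be at least as coarse as every first-coordinate scale occurring in the block, so that each $R_1$ appearing is contained in $I$ or disjoint from it. (The paper's indexing has an off-by-one slip here: with $\mathcal F_t$ at scale $\lfloor tn/q\rfloor$ and $\mathbb I_t=\{(t-1)n/q\le r_1<tn/q\}$ the containment would go your way; but the stated range $0\le s_1<n/q$ with $\lvert\vec s\rvert=n-\lfloor tn/q\rfloor$, and the index $(r_1-\lfloor tn/q\rfloor,r_2,r_3)$ written in the paper's proof, make the intended convention clear --- the conditioning field sits at the coarse end of the block --- and it is precisely this structure that the later conditional square-function and Littlewood--Paley steps exploit.) With $R_1\subseteq I$ there is no single ancestor $R_1^{\star}$: the restriction of $f_{\vec r}$ to $I\times[0,1]^2$ is a sum over the \emph{entire} grid of dyadic intervals $R_1\subseteq I$ of length $2^{-r_1}$, and rescaling $I$ onto $[0,1]$ carries that grid onto the full dyadic grid at scale $2^{-(r_1-\lfloor tn/q\rfloor)}$, so the rescaled restriction is an honest $\mathsf r$-function with parameter $(r_1-\lfloor tn/q\rfloor,r_2,r_3)$, jointly in $\vec r$, with signs given by those $\varepsilon_R$ for which $R_1\subseteq I$ --- which is exactly the dependence on the atom recorded in the Proposition.
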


\begin{proof}
An atom $ I$ of $ \mathcal F_t$ are dyadic intervals of length $ 2 ^{-\lfloor t n/q\rfloor}$. 
For $\vec r\in \mathbb I _{t}$, $ f _{\vec r}$ restricted to $ I \times [0,1] ^2 $, with normalized 
measure, is an $ \mathsf r$-function with index 
\begin{equation*}
(r_1- \lfloor t n/q\rfloor,\, r_2, r_3) \,. 
\end{equation*}
The statement holds jointly in $ \vec r\in \mathbb I _t$ so finishes the proof.  
\end{proof}

Define sum of `blocks' of $ f _{\vec r}$ as   
\begin{align} \label{e.B}
B_t &\coloneqq  \sum _{\vec r \in \mathbb I _t } f _{\vec r} \,, 
\\ \label{e.cap}
{\textstyle\bigsqcap}_t & \coloneqq \sum _{\substack{\vec r \neq \vec s\in \mathbb I _t \\ r_1=s_1 }} f _{\vec r} \cdot f _{\vec s} \,. 
\end{align}
The sums ${\textstyle\bigsqcap} _t$ play a distinguished role in our analysis, as revealed by the 
basic computation of a square function in \eqref{e.condSqfn} and the fundamental Lemma~\ref{l.exp}.  
Let us set $ \sigma _t ^2 = \norm B_t. 2 . ^2  \simeq n ^2 /q $, for $ 0\le t \le q/2$.

We want to show that for $ q$ as big as $ c n ^{1/4}$, we have 
\begin{equation}\label{e.>}
\mathbb P \Bigl( \sum _{t=1} ^{q/2} B_t \gtrsim n \sqrt q   \Bigr) > 0 
\end{equation}
In fact, we will show 
\begin{equation*}
\mathbb P \Bigl(\bigcap _{t=1} ^{q/2} \bigl\{B_t \gtrsim n/\sqrt q\bigr\} \Bigr)> 0\,, 
\end{equation*}
 from which \eqref{e.>} follows immediately. 

Note that the event $ \bigl\{B_t \gtrsim n/\sqrt q\bigr\}$ simply requires that $ B_t$ be of typical size, 
and  positive, that is this event will have a large probability.
Clearly, we should try to show that these events are in some sense independent, in which 
case the lower bound in \eqref{e.>} will be of the form $ \operatorname e ^{-C q}$, for some $ C>0$.  
Exact independence, as we had in the two-dimensional case, is too much to hope for. Instead, we will 
aim for some conditional independence, as expressed in Lemma~\ref{l.condIndep}.  
\smallskip 

There is a crucial relationship between $ B_t $ and $ {\textstyle\bigsqcap}_t$, which is expressed 
through the martingale square function of $ B_t$, computed in the first coordinate.  
Namely, define 
\begin{equation}\label{e.SqDef}
\operatorname S (B_t) ^2 \coloneqq 
\sum _{j \in  J_t} \ABs{\sum _{\vec r \;:\; r_1 =j} f _{\vec r}} ^2 
\end{equation}
where $ J_t = \{ s \in \mathbb N \;:\; (t-1)n/q\le s < tn/q\}$. 

\begin{proposition}\label{p.sqfn} We have 
\begin{align}  \label{e.sqfn}
\operatorname S (B_t) ^2  &= \sigma ^2 _t + {\textstyle\bigsqcap}_t \,, 
\\ \label{e.condSqfn}
\operatorname S (B_t \;:\; \mathcal F_t) &=  
\sigma ^2  _t + \mathbb E ({\textstyle\bigsqcap}_t \;:\; \mathcal F_t) \,. 
\end{align}
\end{proposition}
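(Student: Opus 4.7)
The plan is to prove \eqref{e.sqfn} by direct algebraic expansion of the definition \eqref{e.SqDef}, and then obtain \eqref{e.condSqfn} as an immediate corollary by taking conditional expectation on $\mathcal F_t$.

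First, I would regroup $B_t = \sum_{j \in J_t} G_j$ where $G_j \coloneqq \sum_{\vec r \in \mathbb I_t,\, r_1 = j} f_{\vec r}$. By definition of the square function in \eqref{e.SqDef},
\[
\operatorname S(B_t)^2 \;=\; \sum_{j \in J_t} G_j^{\,2} \;=\; \sum_{j \in J_t} \;\sum_{\substack{\vec r,\vec s\,\in\, \mathbb I_t\\ r_1 \,=\, s_1 \,=\, j}} f_{\vec r}\, f_{\vec s},
\]
which I then split into its diagonal ($\vec r = \vec s$) and off-diagonal ($\vec r \neq \vec s$) pieces.

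Second, in the diagonal piece I use the identity $f_{\vec r}^{\,2} \equiv 1$ from Definition~\ref{d.rfunction} to reduce the integrand to the deterministic constant $\#\mathbb I_t$. Because distinct $\mathsf r$-functions are mutually $L^2$-orthogonal (their Haar supports lie in rectangles of disjoint scale vectors $\vec r$), the same count yields $\sigma_t^2 = \norm B_t.2.^{2} = \#\mathbb I_t$, so the diagonal equals $\sigma_t^2$ pointwise. In the off-diagonal piece, merging the outer $j$-sum into the constraint $r_1 = s_1$ produces exactly $\sum_{\vec r \neq \vec s \in \mathbb I_t,\, r_1 = s_1} f_{\vec r}\, f_{\vec s} = {\textstyle\bigsqcap}_t$ as in \eqref{e.cap}. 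This establishes \eqref{e.sqfn}.

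Third, since \eqref{e.sqfn} is a pointwise identity of functions, I simply apply $\mathbb E(\,\cdot \;:\; \mathcal F_t)$ to both sides; the deterministic constant $\sigma_t^2$ passes through unchanged and linearity yields \eqref{e.condSqfn}. There is no substantive obstacle: the argument is bookkeeping, and the only step worth a second look is checking that the index set $\{(\vec r,\vec s) \in \mathbb I_t^2 : r_1 = s_1 = j,\ j \in J_t\}$ coincides with $\{(\vec r,\vec s) \in \mathbb I_t^2 : r_1 = s_1\}$, which is immediate because the only restriction on $r_1$ in the definition of $\mathbb I_t$ is that $r_1 \in J_t$.
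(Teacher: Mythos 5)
Your proof is correct and follows essentially the same route as the paper: expand the square in \eqref{e.SqDef}, identify the diagonal terms (using $f_{\vec r}^2\equiv 1$) with $\sigma_t^2=\#\mathbb I_t$ and the off-diagonal terms with ${\textstyle\bigsqcap}_t$, then apply $\mathbb E(\,\cdot\;:\;\mathcal F_t)$ to get \eqref{e.condSqfn}. The only difference is that you spell out the orthogonality argument identifying $\norm B_t.2.^2$ with $\#\mathbb I_t$, which the paper leaves implicit.
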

By construction, we have $ {}^{\sharp}\, \mathbb I _t \simeq n ^2 /q$, for $ 0\le t<\tfrac 12 \, q$.

\begin{proof}
In \eqref{e.SqDef}, one expands the square on the right hand side.  Notice that this shows that 
\begin{equation*}
\operatorname S (B_t) ^2  = \sum _{\substack{\lvert  \vec r\rvert= \lvert  \vec s\rvert=n  \\ r_1=s_1 \in J_t }} 
f _{\vec r} \cdot f _{\vec s} \,. 
\end{equation*}
We can have  $ \vec r=\vec s$ for  $ ^\sharp\, \mathbb I _t$ choices of $ \vec r$.  Otherwise, we have a 
term that contributes to  $ {\textstyle\bigsqcap}_t $. The conditional expectation conclusion follows from
\eqref{e.sqfn}
\end{proof}

The next fact is the critical observation in \cites{MR2414745,MR2409170,MR2375609} concerning coincidences, 
assures us that typically on the right in \eqref{e.sqfn}, that the first term $\sigma ^2  _t \simeq n ^2 /q $ 
is much larger than the second $ {\textstyle\bigsqcap}_t$.   See 
\cite{MR2414745}*{4.1, and the discussion afterwords}. 

\begin{lemma}\label{l.exp}  We have the uniform estimate 
\begin{equation*}
\norm {\textstyle\bigsqcap} _t . \operatorname {exp} (L ^{2/3}). \lesssim n ^{3/2} / \sqrt q \,. 
\end{equation*}
Here, we are using standard notation for an exponential Orlicz space. 
\end{lemma}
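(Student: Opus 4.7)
The plan is to reduce the Orlicz estimate to a family of $L^p$ moment bounds
\begin{equation*}
\|{\textstyle\bigsqcap}_t\|_p \lesssim p^{3/2} \cdot \frac{n^{3/2}}{\sqrt q}, \qquad p \ge 2,
\end{equation*}
and invoke the standard equivalence $\|Z\|_{\exp(L^{2/3})} \simeq \sup_{p \ge 2} p^{-3/2}\|Z\|_p$.

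The starting point is the algebraic structure of each summand $f_{\vec r} f_{\vec s}$. For $\vec r, \vec s \in \mathbb I_t$ with $r_1 = s_1 = j$ and $\vec r \ne \vec s$ one must have (up to relabeling) $r_2 < s_2$ and $r_3 > s_3$. For rectangles $R, R'$ at scales $\vec r, \vec s$ the product $h_R h_{R'}$ vanishes unless $R_1 = R'_1$ (in which case the first factor is $\mathbf 1_{R_1}$), while in coordinates $2$ and $3$ the coarser Haar function is constant on the support of the finer, so $h_{R_2}h_{R'_2} = \pm h_{R'_2}$ and $h_{R_3}h_{R'_3} = \pm h_{R_3}$. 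Thus $f_{\vec r} f_{\vec s}$ is a signed sum of mean-zero ``Haar-times-indicator'' functions on rectangles of hyperbolic index $\vec u(\vec r, \vec s) := (j, \max(r_2,s_2), \max(r_3,s_3))$, with $|\vec u| > n$. Near-orthogonality of these building blocks across different pairs then yields the $L^2$ estimate $\|{\textstyle\bigsqcap}_t\|_2^2 \lesssim \#\{(\vec r, \vec s) \in \mathbb I_t^2 : r_1 = s_1,\ \vec r \ne \vec s\} \simeq n^3/q$, confirming at least the target size at the level of second moments.

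For the higher moments, I would expand
\begin{equation*}
\mathbb E [{\textstyle\bigsqcap}_t^p] = \sum_{(\vec r_i, \vec s_i)_{i=1}^p} \mathbb E \prod_{i=1}^p f_{\vec r_i} f_{\vec s_i},
\end{equation*}
substitute the product identity above, and apply the \emph{coincidence counting} methodology of \cite{MR2414745}: coordinate by coordinate, the $2p$ implicated Haar and indicator factors must combine to a function of non-zero integral, which severely restricts the admissible configurations. One organizes the contributing tuples by the partition of $\{1, \ldots, p\}$ induced by coincidences of the $\vec u(\vec r_i, \vec s_i)$ in coordinates $2$ and $3$, while exploiting the narrow-strip restriction $r_1 \in J_t$ of width $n/q$ in coordinate $1$ --- this is what will eventually produce the factor $q^{-p/2}$. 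Summed over combinatorial types, the hope is to establish $\|{\textstyle\bigsqcap}_t\|_p^p \lesssim C^p\, p^{3p/2}\, (n^3/q)^{p/2}$, which is precisely the required bound.

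The main obstacle --- and presumably the reason $\exp(L^{2/3})$ rather than a stronger Orlicz space appears in the conclusion --- is the combinatorial bookkeeping for the non-generic coincidence patterns. The constraint $r_1 = s_1$ forces every pair-term to double-count in the first coordinate, so multi-pair collisions at a common first-coordinate index are abundant; one must show that these are controlled by the narrow-strip restriction on $r_1$ together with the sharper orthogonality available in coordinates $2$ and $3$. A careful case analysis on the multiplicity of coincidences in each coordinate, patterned after the arguments in \cite{MR2414745} and \cite{MR2375609}, is where I expect the bulk of the work to reside, and is the crucial step that must be carried out cleanly for the exponent $3/2$ in $p^{3/2}$ to come out correctly.
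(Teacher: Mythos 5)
The paper does not actually prove Lemma \ref{l.exp} in-house: it is imported from \cite{MR2414745} (the ``Beck gain'', Section 4.1 there and the discussion following it), the only new ingredient being the restriction $r_1\in J_t$ to a strip of width $n/q$, which shrinks the number of admissible pairs from about $n^3$ to about $n^3/q$ and hence supplies the factor $q^{-1/2}$. Your outline reconstructs exactly the route that the cited proof takes: the equivalence $\norm Z.\operatorname{exp}(L^{2/3}). \simeq \sup_{p\ge 2} p^{-3/2}\norm Z.p.$ is the right reduction, the product structure of $f_{\vec r}f_{\vec s}$ (an indicator in the first coordinate, finer Haar functions in coordinates two and three, total index strictly greater than $n$) is correct, and the target moment bound $\norm {\textstyle\bigsqcap}_t.p. \lesssim p^{3/2}(n^3/q)^{1/2}$ is precisely the restricted Beck gain with the correct numerology. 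What you have not done is prove that moment bound: the coincidence-counting case analysis that you describe as ``the bulk of the work'' is the entire content of the lemma, and it is genuinely delicate --- it is the main technical achievement of \cite{MR2414745}. Note also that even your $L^2$ step is not mere near-orthogonality: distinct pairs $(\vec r,\vec s)\neq(\vec r\,',\vec s\,')$ produce correlated products, so the off-diagonal fourth-order coincidences must themselves be counted before the diagonal count $n^3/q$ can be declared dominant. In sum, your proposal is a faithful sketch of the same argument the paper relies on, but at the level of proof it defers the decisive estimate; this is defensible only in the sense that the paper itself black-boxes that estimate by citation rather than proving it.
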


\begin{remark}\label{r.higher} A variant of Lemma~\ref{l.exp} holds in higher dimensions, which permits 
an extension of Theorem~\ref{t.3} to higher dimensions.  We return to this in \S\ref{s.heuristics}.  
\end{remark}

Let us quantify the relationship between these two observations and our task of proving \eqref{e.>}.  
\begin{proposition}\label{p.tau}  There is a universal constant $ \tau >0$ so that defining the event 
\begin{equation} \label{e.Gamma}
\Gamma _t \coloneqq  \Bigl\{  \mathbb E ({\textstyle\bigsqcap}_t ^2  \;:\; \mathcal F_t)^{1/2}
< \tau n ^2 /q 
\Bigr\}
\end{equation}
we have the estimate 
\begin{equation} \label{e.B>}
\mathbb P ( B_t > \tau \cdot n /\sqrt q \;:\; \Gamma _t ) > \tau \mathbf 1_{\Gamma _t} \,. 
\end{equation}
\end{proposition}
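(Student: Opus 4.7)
The plan is to apply the Second Paley--Zygmund Inequality to $B_t$ conditionally on $\mathcal{F}_t$, on each atom contained in $\Gamma_t$. This requires verifying that the conditional distribution of $B_t$ has mean zero, a conditional $L^2$ norm of order $\sigma_t \simeq n/\sqrt q$, and a comparable conditional $L^4$ norm.

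Mean zero and the $L^2$ estimate are straightforward. Since $\mathcal{F}_t$ is generated by intervals in $x_1$ only, conditional expectation averages over $(x_2, x_3)$ and annihilates the factors $h_{R_2}(x_2), h_{R_3}(x_3)$ present in every $f_{\vec r}$, so $\mathbb{E}(B_t\;:\;\mathcal{F}_t) = 0$. For the $L^2$ bound, identity \eqref{e.condSqfn} gives $\mathbb{E}(B_t^2\;:\;\mathcal{F}_t) = \sigma_t^2 + \mathbb{E}({\textstyle\bigsqcap}_t\;:\;\mathcal{F}_t)$; by Cauchy--Schwarz and the definition of $\Gamma_t$, $\abs{\mathbb{E}({\textstyle\bigsqcap}_t\;:\;\mathcal{F}_t)} \le \mathbb{E}({\textstyle\bigsqcap}_t^2\;:\;\mathcal{F}_t)^{1/2} < \tau n^2/q \simeq \tau\sigma_t^2$, so $\mathbb{E}(B_t^2\;:\;\mathcal{F}_t) \gtrsim \sigma_t^2$ for $\tau$ small.

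The $L^4$ bound is the heart of the matter. I would apply Lemma~\ref{l.lp} conditionally on $\mathcal{F}_t$: the martingale differences $d_j = \sum_{\vec r \in \mathbb{I}_t,\, r_1 = j} f_{\vec r}$ are conditionally symmetric in $x_1$ thanks to the odd symmetry of $h_{R_1}$, and the moment-pairing argument of Lemma~\ref{l.lp} runs with conditional expectations in place of expectations to yield $\mathbb{E}(B_t^4\;:\;\mathcal{F}_t) \lesssim \mathbb{E}(\operatorname{S}(B_t)^4\;:\;\mathcal{F}_t)$. Squaring \eqref{e.sqfn} and conditioning produces
\begin{equation*}
\mathbb{E}(\operatorname{S}(B_t)^4\;:\;\mathcal{F}_t) = \sigma_t^4 + 2\sigma_t^2\,\mathbb{E}({\textstyle\bigsqcap}_t\;:\;\mathcal{F}_t) + \mathbb{E}({\textstyle\bigsqcap}_t^2\;:\;\mathcal{F}_t),
\end{equation*}
and on $\Gamma_t$ both of the last two terms are at most $C\tau\sigma_t^4$, so $\mathbb{E}(B_t^4\;:\;\mathcal{F}_t) \le C'\sigma_t^4$. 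The hypotheses of PZ2 are thus verified on $\Gamma_t$, and PZ2 produces a universal $\rho_2 > 0$ with $\mathbb{P}(B_t > \rho_2\sigma_t\;:\;\mathcal{F}_t) > \rho_2$ there. Choosing $\tau \le \rho_2$ and small enough to sustain the preceding two steps delivers \eqref{e.B>}.

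The main obstacle is justifying the conditional form of Lemma~\ref{l.lp}: because $\mathcal{F}_t$ sits at the terminal scale of the $x_1$-martingale defining $B_t$, conditioning on $\mathcal{F}_t$ strips away the $x_1$-randomness that the pair-up argument would ordinarily exploit, so the conditional symmetry hypothesis cannot be used in its naive form. The cleanest workaround I envision is to apply Lemma~\ref{l.lp} pointwise in $(x_2, x_3)$---as a one-dimensional inequality in $x_1$ on all of $[0,1]$---and then to exploit the fact that both $B_t$ and $\operatorname{S}(B_t)$ are constant in $x_1$ on every atom of $\mathcal{F}_t$ to transfer the global $L^4/L^2$ comparison to the per-atom comparison required by PZ2, with the uniform pointwise bound on $\mathbb{E}({\textstyle\bigsqcap}_t^2\;:\;\mathcal{F}_t)$ afforded by $\Gamma_t$ playing a decisive role.
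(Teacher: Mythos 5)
Your skeleton coincides with the paper's proof: conditional mean zero, the conditional second moment via \eqref{e.condSqfn} with the Cauchy--Schwarz bound $\lvert \mathbb E({\textstyle\bigsqcap}_t \;:\; \mathcal F_t)\rvert \le \mathbb E({\textstyle\bigsqcap}_t^2 \;:\; \mathcal F_t)^{1/2}<\tau n^2/q$ on $\Gamma_t$, the conditional fourth moment obtained by squaring \eqref{e.sqfn} and applying Lemma~\ref{l.lp} conditionally, and finally the second Paley--Zygmund inequality applied atom by atom. The one place you deviate is the step you flag as the ``main obstacle,'' and there your workaround has a genuine gap. Applying Lemma~\ref{l.lp} globally in $x_1$ for each fixed $(x_2,x_3)$ gives $\int_0^1 B_t^4\,dx_1 \simeq \int_0^1 \operatorname S(B_t)^4\,dx_1$, which after averaging in $(x_2,x_3)$ controls only the \emph{average over the atoms} of $\mathcal F_t$ of the conditional fourth moments, not each atom separately; so it cannot be matched with the per-atom hypothesis defining $\Gamma_t$, and no constancy-on-atoms argument converts it into the per-atom $L^4/L^2$ comparison that PZ2 needs. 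Worse, if $B_t$ really were constant in $x_1$ on each atom, then on that atom the required comparison would be a statement purely about the $(x_2,x_3)$-dependence of $B_t$, about which an $x_1$-martingale square-function inequality says nothing.

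The obstacle itself is an artifact of the paper's off-by-one indexing, and the paper's proof is exactly the ``naive'' conditional application you first wrote down and then rejected. The conditioning field in this Proposition must be read as the sigma-field generated by the first-coordinate scales \emph{coarser} than those occurring in block $t$: this is forced by the proof of Proposition~\ref{p.condExp} (the shifted index $r_1-\lfloor tn/q\rfloor$ is nonnegative only under that reading) and by the later use through Lemma~\ref{l.condIndep}, where \eqref{e.E>} conditions each event on the \emph{previous} field. With that reading, Proposition~\ref{p.condExp} says that on each atom $I\times[0,1]^2$, with normalized measure, $\{f_{\vec r}\;:\;\vec r\in\mathbb I_t\}$ is again a family of $\mathsf r$-functions whose first-coordinate scales lie inside $I$; hence the restriction of $B_t$ to an atom is itself an $x_1$-martingale with conditionally symmetric differences $\sum_{r_1=j}f_{\vec r}$, Lemma~\ref{l.lp} applies on each atom, and one obtains $\mathbb E(B_t^4\;:\;\mathcal F_t)\lesssim \sigma_t^4+2\sigma_t^2\,\mathbb E({\textstyle\bigsqcap}_t\;:\;\mathcal F_t)+\mathbb E({\textstyle\bigsqcap}_t^2\;:\;\mathcal F_t)$, exactly the bound you wanted; your PZ2 step then finishes the proof. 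In short, the fix is not a global-to-local transfer trick but the correct identification of the conditioning field, after which your original conditional argument is the paper's.
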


The point of this estimate is that the events $ \Gamma _t$ will be overwhelmingly likely for $ q \ll n$.  

\begin{proof}
This is a consequence of the Paley-Zygmund Inequalities, Proposition~\ref{p.condExp}, 
Littlewood-Paley inequalities, and \eqref{e.condSqfn}. 
Namely, by Proposition~\ref{p.condExp}, we have $ \mathbb E (B_t \;:\; \mathcal F_t)=0$, 
and the conditional distribution of $ B_t$ given $ \mathcal F_t$ is symmetric.  
By \eqref{e.condSqfn}, we have 
\begin{align*}
\mathbb E (B_t ^2  \;:\; \mathcal F_t)= \operatorname S (B_t \;:\; \mathcal F_t)
=\sigma _{t} ^2 + \mathbb E ({\textstyle\bigsqcap}_t \;:\; \mathcal F_t)  \,. 
\end{align*}
We  apply the Littlewood-Paley inequalities \eqref{e.LP} to see that 
\begin{align*}
\mathbb E (B_t ^ 4\;:\; \mathcal F_t) 
& \lesssim   \mathbb E (S (B_t \;:\; \mathcal F_t) ^2 \;:\; \mathcal F_t ) 
\\
& = \sigma _t ^{4} + 2\sigma _t ^2  \mathbb E ({\textstyle\bigsqcap}_t \;:\; \mathcal F_t) 
+ \mathbb E ({\textstyle\bigsqcap}_t ^2 \;:\; \mathcal F_t) \,. 
\end{align*}

The event $ \Gamma _t $ gives an upper bound on the terms involving $ {\textstyle\bigsqcap}_t  $ above. 
This permits us to estimate, as $ \Gamma _t \in \mathcal F_t$, 
\begin{equation*}
\bigl\lvert 
\mathbb E (B_t ^2 \;:\; \Gamma _t ) ^{1/2} - \sigma _t \mathbf 1_{\Gamma _t}
\bigr\rvert \le \tau n/\sqrt q \,, 
\end{equation*}
but $ \sigma _{t} \simeq  n/\sqrt q$, so we have $ \mathbb E (B_t ^2 \;:\; \Gamma _t ) ^{1/2} \simeq n/\sqrt q $. 
Similarly, 
\begin{align*}
\mathbb E (B_t ^4 \;:\; \Gamma _t ) ^{1/4} & \lesssim 
 \sigma _t  +  \sigma _t  ^{1/2} 
 \lvert  \mathbb E ({\textstyle\bigsqcap}_t \;:\; \mathcal F_t) \rvert ^{1/4} 
 +  \mathbb E ({\textstyle\bigsqcap}_t ^2 \;:\; \mathcal F_t)  ^{1/4} 
 \\
 & \lesssim (1+ \tau )\sigma _t  \,. 
\end{align*}
Hence, we can apply the Paley-Zygmund inequality \eqref{e.pz2} to conclude the Proposition. 
\end{proof}

By way of explaining the next steps, let us observe the following.  If  we have 
\begin{equation} \label{e.x}
\mathbb E (\mathbf 1 _{\Gamma _t}    \;:\; \mathcal F_t )\ge \tau \qquad \textup{a.s. } (x_1)\,, 
\qquad  1\le t\le q/2 \,, 
\end{equation}
then  \eqref{e.B>} holds, namely $ \mathbb P ( B_t ( \cdot , x_2, x_3) > \tau \cdot n /\sqrt q \;:\; \Gamma _t ) > \tau $ 
almost surely.  Applying 
Lemma~\ref{l.condIndep}, and in particular \eqref{e.F>}, we then have 
\begin{equation*}
\mathbb P _{x_1} \Bigl( \bigcap _{t=1} ^{q/2} \{ B_t ( \cdot , x_2, x_3)> \tau n/\sqrt q\}    \Bigr) 
\ge \tau ^{q/2} \,. 
\end{equation*}
Of course there is no reason that such a pair $ (x_2,x_3)$ exits.  Still,  
 the second half of Lemma~\ref{l.condIndep} will apply 
if  we can demonstrate that 
we can choose $ x_2, x_3$ so that \eqref{e.x} holds 
except on a set, in the $ x_1$ variable,  of sufficiently small probability.

\smallskip 

Keeping \eqref{e.E>>} in mind, let us identify an exceptional set. Use the sets $ \Gamma _t $ as given 
in \eqref{e.Gamma} to define 
\begin{equation}\label{e.E}
E \coloneqq \Biggl\{ (x_2, x_3) \;:\; 
\mathbb P _{x_1} \Biggl[ 
\bigcup _{t=1} ^{ q/2 }  \Gamma _t ^{c}
\Biggr]  > \operatorname {exp}\bigl(- c_1 (n/ q) ^{1/3}\bigr)
\Biggr\}
\end{equation}
Here, $ c_1>0$ will be a sufficiently small constant, independent of $ n$. 
Let us give an upper bound on this set.  
\begin{align}
\mathbb P _{x_2,x_3} (E) & \le \operatorname {exp}\bigl( c_1 (n/ q) ^{1/3}\bigr)
\cdot 
\mathbb P _{x_1,x_2,x_3} \Bigl(  \bigcup _{t=1} ^{ q/2 }  \Gamma _t ^{c}\Bigr)
\\& 
\le 
\operatorname {exp}\bigl( c_1 (n/ q) ^{1/3}\bigr) 
\sum _{t=1} ^{ q/2 } \mathbb P _{x_1,x_2,x_3} (\Gamma _t ^{c})
\\ & 
\lesssim q \operatorname {exp}\bigl( c_1 (n/ q) ^{1/3}\bigr) \cdot 
\operatorname {exp} \Biggl( - \Bigl[
 \tau (n ^2 /q) 
\norm \mathbb E \bigl( {\textstyle\bigsqcap}_t ^2 
\;:\; \mathcal F_t\bigr) ^{1/2} . \operatorname {exp}(L ^{2/3}) .  ^{-1} 
\Bigr] ^{2/3} \Biggr)
\\ \label{e.c2}
& \le 
q \operatorname {exp} \bigl( (c_1-c_2 \tau ^{2/3}) \cdot (n/ q) ^{1/3} \bigr)
\end{align}
Here, we have used Chebyscheff inequality.  And, more importantly, the convexity of 
conditional expectation and $ L ^2 $-norms to estimate 
\begin{equation*}
\norm \mathbb E \bigl( {\textstyle\bigsqcap}_t ^2 
\;:\; \mathcal F_t\bigr) ^{1/2} . \operatorname {exp}(L ^{2/3}) .
\lesssim n ^{3/2}/\sqrt q \,,
\end{equation*}
by Lemma~\ref{l.exp}.  The implied constant is absolute, and determines the constant $ c_2$ 
in \eqref{e.c2}.  For an absolute choice of $ c_1$, and constant $ \tau '$, we see that we have 
\begin{equation}\label{e.PE}
\mathbb P _{x_2,x_3} (E) \lesssim \operatorname {exp}(-\tau '  (n/q) ^{1/3} ) \,. 
\end{equation}
We only need $ \mathbb P_{x_2,x_3} (E)< \tfrac 12$, but an exponential estimate of this 
type is to be expected.

Our last essential estimate is 

\begin{lemma}\label{l.EB} For  $ 0<\kappa <1$ sufficiently small, 
$ q \le \kappa n ^{1/4}$,  and $ (x_2,x_3)\not\in E$,  we have 
\begin{equation*}
\mathbb P _{x_1} \Bigl( \bigcap _{t=1} ^{q/2} \{ B_t ( \cdot , x_2, x_3)> \tau n/\sqrt q\}    \Bigr) \gtrsim \tau ^{q}  \,.
\end{equation*}
\end{lemma}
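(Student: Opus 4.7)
My plan is to apply the conditional-independence estimate \eqref{e.F>>} of Lemma~\ref{l.condIndep} in the $x_1$-variable, with the filtration $\{\mathcal F_t\}$, the events
\begin{equation*}
A_t \coloneqq \bigl\{ x_1 \;:\; B_t(x_1, x_2, x_3) > \tau n/\sqrt q \bigr\},
\qquad 1\le t \le q/2,
\end{equation*}
and parameter $\gamma = \tau$, at the fixed $(x_2, x_3) \notin E$. Since $B_t(\cdot, x_2, x_3)$ is $\mathcal F_t$-measurable in $x_1$, we have $A_t \in \mathcal F_t$ as required for the hypothesis of Lemma~\ref{l.condIndep}.

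To verify hypothesis \eqref{e.E>>}, I first invoke Proposition~\ref{p.tau} pointwise in $(x_2, x_3)$: wherever $\Gamma_t$ holds one has $\mathbb E_{x_1}(\mathbf 1_{A_t} \;:\; \mathcal F_{t-1}) > \tau$, so the ``bad'' events satisfy
\begin{equation*}
\bigl\{ \mathbb E_{x_1}(\mathbf 1_{A_t} \;:\; \mathcal F_{t-1}) \le \tau \bigr\} \subseteq \Gamma_t^{c}.
\end{equation*}
Since $(x_2,x_3) \notin E$, the definition \eqref{e.E} then gives $\mathbb P_{x_1}(\bigcup_{t=1}^{q/2} \Gamma_t^{c}) \le \exp(-c_1 (n/q)^{1/3})$. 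For $q \le \kappa n^{1/4}$ one has $(n/q)^{1/3} \ge \kappa^{-1/3} n^{1/4}$, while $\tau^{q/2} \ge \exp(-\tfrac{\kappa}{2}\log(1/\tau)\, n^{1/4})$, so choosing $\kappa$ sufficiently small depending on $c_1$ and $\tau$ forces $\exp(-c_1(n/q)^{1/3}) \le \tfrac 12 \tau^{q/2}$, which is \eqref{e.E>>} with $\gamma = \tau$ and $q$ replaced by $q/2$. The conclusion \eqref{e.F>>} then delivers
\begin{equation*}
\mathbb P_{x_1}\Bigl(\bigcap_{t=1}^{q/2} A_t\Bigr) \ge \tfrac 12\,\tau^{q/2} \gtrsim \tau^q,
\end{equation*}
since $\tau\in (0,1)$ is an absolute constant.

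The point requiring care is the first inclusion. Proposition~\ref{p.tau} is stated over the full probability space $[0,1]^3$, and one must confirm that its lower bound on the $\mathcal F_{t-1}$-conditional probability of $A_t$ descends to the fixed slice $(x_2, x_3) \notin E$. The justification is structural: the Paley--Zygmund and Littlewood--Paley steps underlying Proposition~\ref{p.tau} are carried out conditionally on $\mathcal F_{t-1}$, and because $\mathcal F_{t-1}$ is generated purely by dyadic intervals in the $x_1$-coordinate, the argument localizes uniformly in $(x_2, x_3)$ off the exceptional set. Granted this, what remains is a routine balancing of the exponential exceptional-set estimate against the target geometric probability $\tau^q$.
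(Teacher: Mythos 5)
Your argument is correct and is essentially the paper's own proof: both apply \eqref{e.F>>} of Lemma~\ref{l.condIndep} in the $x_1$-variable with $\gamma=\tau$ and $q$ replaced by $q/2$, identify the bad events with $\Gamma_t^{c}$ via Proposition~\ref{p.tau}, and use $(x_2,x_3)\notin E$ together with $q\le \kappa n^{1/4}$ to check $\exp\bigl(-c_1(n/q)^{1/3}\bigr)\le \tfrac12\,\tau^{q/2}$. The localization issue you flag is exactly how the paper itself reads \eqref{e.B>} --- the conditional expectations are taken in the first variable, so the bound holds pointwise in $(x_2,x_3)$ --- and thus requires no further argument.
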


Assuming this Lemma, we can select 
$ (x_2,x_3)\not\in E$.  Thus, 
we see that there is some $ (x_1,x_2,x_3) $ so that for all 
$ 1\le t\le q/2$ we have  $ B_t(x_1,x_2,x_3) > \tau n/\sqrt q$, whence 
\begin{equation*}
\sum _{t=1} ^{q/2} B_t(x_1,x_2,x_3) > \tfrac \tau2 \cdot  n \sqrt q \,. 
\end{equation*}
That is, \eqref{e.>} holds.  
And we can make the last expression as big as $ \gtrsim n ^{9/8}$.  

\begin{proof}
If $ (x_2, x_3) \not \in E$, bring together the 
definition of $ E$ in \eqref{e.E},  Proposition~\ref{p.tau}, and Lemma~\ref{l.condIndep}.  
We see that \eqref{e.F>>} holds (with $ \gamma = \tau $, and the $ q$ in \eqref{e.F>>} equal to the current 
$ q/2$) provided 
\begin{equation*}
\tfrac 12 \cdot \tau ^{q/2}> \operatorname {exp}\bigl(- c_1 (n/ q) ^{1/3}\bigr)\,. 
\end{equation*}
But this is true by inspection, for $ q\le \kappa n ^{1/4}$.

\end{proof}

\section{Heuristics} \label{s.heuristics}

In two dimensions, Proposition~\ref{p.2^n} clearly reveals an underlying exponential-square distribution 
governing the Small Ball Inequality.  The average case estimate is $ n ^{1/2}$, and the set on which 
the sum is about $ n$ (a square root gain over the average case) is exponential in $ n$.  

Let us take it for granted that the same phenomena should hold in three dimensions.  Namely, in three dimensions 
the average case estimate for a signed small ball sum is $ n $, then the 
event that the sum exceeds $ n ^{3/2}$ (a square root gain over the average case) is also exponential in $ n$.  
How could this be proved?  Let us write 
\begin{align*}
H & = \sum _{\substack{\lvert  R\rvert= 2 ^{-n}\\ \lvert  R_1\rvert \ge 2 ^{-n}  } } a_R h_R 
= \sum _{\substack{\lvert  \vec r\rvert= n\\ r_1\le n/2 } } f _{\vec r}
= \sum _{j=0} ^{n/2}  \beta _j \,, 
\\
\beta _j & \coloneqq \sum _{\substack{\lvert  \vec r\rvert=n \\ r_1=j }} f _{\vec r} \,. 
\end{align*}
Here we have imposed the same restriction on the first coordinate as we did in Theorem~\ref{t.3}.  With this
restriction, note that each $ \beta _j$ is a two-dimensional sum, hence by Proposition~\ref{p.independent}, 
a sum of bounded independent random variables.  It follows that we have by the usual Central Limit Theorem, 
\begin{equation*}
\mathbb P (\beta _j > c \sqrt n) \ge \tfrac 14 \,, 
\end{equation*}
for a fixed constant $ c$.  If one could argue for some sort of independence of the events $ \{\beta _j>c \sqrt n\}$ 
one could then write 
\begin{align*}
\mathbb P (H>c n ^{3/2}) \ge \mathbb P \Bigl( \bigcap _{j=0} ^{n/2} \{\beta _j>c \sqrt n\} \Bigr) 
\gtrsim \epsilon ^{n} \,, 
\end{align*}
for some $ \epsilon >0$.  This matches the `exponential in $ n$' heuristic. 
We cannot implement this proof for the $ \beta _j$, but can in the more restrictive 
`block sums' used above.  

\bigskip 

We comment on extensions of Theorem~\ref{t.3} to higher dimensions.  Namely, the methods of this paper will 
prove 

\begin{theorem}\label{t.4} For $ \lvert  a_R\rvert=1 $ for all $ R$, we have the estimate estimate in 
dimensions $ d\ge 4$: 
\begin{equation*}
\NOrm \sum _{\substack{\lvert  R\rvert= 2 ^{-n}\\ \lvert  R_1\rvert\ge 2 ^{-n/2} } } a_R \, h_R . L ^{\infty }. 
\gtrsim n ^{ (d-1)/2+ 1/4d}\,. 
\end{equation*}
We restrict the sum to those dyadic rectangles whose first side has the lower bound $ \lvert  R\rvert\ge 2 ^{-n/2} $. 
\end{theorem}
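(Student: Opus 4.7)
The plan is to carry out the argument for Theorem~\ref{t.3} line by line, with the dimension-dependent exponents updated to their $d$-dimensional counterparts and the coincidences estimate Lemma~\ref{l.exp} replaced by its higher-dimensional analog promised in Remark~\ref{r.higher}.  As before, fix an integer $1\ll q\ll n$, define the sigma fields $\mathcal F_t$ on the first coordinate at scales $2^{-\lfloor tn/q\rfloor}$, and split the indices $\vec r$ with $r_1\le n/2$ into $q/2$ blocks $\mathbb I_t$ of width $n/q$.  Set $B_t=\sum_{\vec r\in\mathbb I_t}f_{\vec r}$ and ${\textstyle\bigsqcap}_t=\sum_{\vec r\neq\vec s,\, r_1=s_1\in J_t}f_{\vec r}f_{\vec s}$.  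The orthogonality of distinct $\mathsf r$-functions gives $\#\mathbb I_t\simeq n^{d-2}\cdot n/q=n^{d-1}/q$, so the benchmark variance is $\sigma_t^2\simeq n^{d-1}/q$ and the target per-block size is $\sigma_t\simeq n^{(d-1)/2}/\sqrt q$.  Summing $q/2$ such blocks produces a gain of $\sqrt q$ over the average-case exponent $n^{(d-1)/2}$, which yields the claimed $n^{(d-1)/2+1/(4d)}$ once $q$ can be pushed up to order $n^{1/(2d)}$.

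The dimension-free parts of the three-dimensional proof transfer verbatim.  Proposition~\ref{p.condExp} still identifies the conditional distribution of $\{f_{\vec r}\;:\;\vec r\in\mathbb I_t\}$ on an atom of $\mathcal F_t$ as that of a generic family of $\mathsf r$-functions; Proposition~\ref{p.sqfn} still gives $\operatorname S(B_t)^2=\sigma_t^2+{\textstyle\bigsqcap}_t$ and its conditional version~\eqref{e.condSqfn}; and Proposition~\ref{p.tau} then combines the Paley--Zygmund inequality, the Littlewood--Paley inequality (Lemma~\ref{l.lp}), and \eqref{e.condSqfn} to produce, on the good event $\Gamma_t=\{\mathbb E({\textstyle\bigsqcap}_t^2\;:\;\mathcal F_t)^{1/2}<\tau\sigma_t^2\}$, the lower bound $\mathbb P(B_t>\tau\sigma_t\;:\;\Gamma_t)>\tau\mathbf 1_{\Gamma_t}$.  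What must be imported from higher-dimensional coincidence theory, and specifically from \cite{MR2414745} and its antecedents, is an exponential-Orlicz estimate of the shape $\|{\textstyle\bigsqcap}_t\|_{\operatorname{exp}(L^{\alpha_d})}\lesssim n^{(d-1)/2}/\sqrt q$ for an exponent $\alpha_d>0$ depending only on $d$.

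The remainder is assembly.  Define the exceptional set $E\subset[0,1]^{d-1}$ in the variables $(x_2,\dots,x_d)$ exactly as in \eqref{e.E}, use Chebyshev together with the Orlicz bound above to conclude that $\mathbb P_{(x_2,\dots,x_d)}(E)<\tfrac12$ provided $q\le\kappa n^{1/(2d)}$, and then apply Lemma~\ref{l.condIndep} to the events $\{B_t(\cdot,x_2,\dots,x_d)>\tau\sigma_t\}$ for a fixed $(x_2,\dots,x_d)\notin E$.  This locates a single point of $[0,1]^d$ at which $\sum_{t=1}^{q/2}B_t$ is of order $n^{(d-1)/2}\sqrt q$, and choosing $q\simeq n^{1/(2d)}$ pins this down to $n^{(d-1)/2+1/(4d)}$.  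The main obstacle is thus entirely contained in producing the sharp form of the higher-dimensional coincidences lemma with the correct Orlicz exponent $\alpha_d$: all other steps are bookkeeping translations of the three-dimensional proof, and it is precisely the interplay between $\alpha_d$, the size $n^{(d-1)/2}/\sqrt q$ of the target, and the Chebyshev tail bound $\exp(-c(n/q)^{\gamma_d})$ that determines why one can afford no more than $q\simeq n^{1/(2d)}$ blocks and hence no more than a gain of $n^{1/(4d)}$.
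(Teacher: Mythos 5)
Your outline is the same as the paper's: Theorem~\ref{t.4} is indeed proved by rerunning the argument of Theorem~\ref{t.3} with the three-dimensional coincidence estimate replaced by a higher-dimensional analogue, and the paper itself only records that analogue and omits the bookkeeping. The genuine gap in your proposal is that the analogue you import is misstated, and its precise parameters are exactly what produce the exponent $1/(4d)$. The correct input (the paper's Lemma~\ref{l.exp4}, taken from \cite{MR2409170}*{Section 5, especially (5.3)}, not from \cite{MR2414745}) is $\lVert {\textstyle\bigsqcap}_t \rVert_{\operatorname{exp}(L^{2/(2d-1)})} \lesssim n^{(2d-3)/2}/\sqrt q$. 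Your version, $\lVert {\textstyle\bigsqcap}_t \rVert_{\operatorname{exp}(L^{\alpha_d})} \lesssim n^{(d-1)/2}/\sqrt q$ with $\alpha_d$ left unspecified, is false already for $d=3$: the sum ${\textstyle\bigsqcap}_t$ has about $n^{2d-3}/q$ essentially orthogonal $\pm1$-valued terms, so its $L^2$ norm alone is of order $n^{(2d-3)/2}/\sqrt q$, which is much larger than $\sigma_t \simeq n^{(d-1)/2}/\sqrt q$. The point of Lemma~\ref{l.exp4} is not a gain in size but the (dimension-dependent) Orlicz exponent $2/(2d-1)$, weaker than the $2/3$ available at $d=3$; this is precisely why Theorem~\ref{t.4} specialized to $d=3$ is worse than Theorem~\ref{t.3}.

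Because of this misstatement, the final computation that pins down $q$, and hence the exponent, is never actually carried out. With the correct lemma, the threshold in $\Gamma_t$ is $\tau\sigma_t^2 \simeq \tau n^{d-1}/q$, the ratio $\sigma_t^2 \big/ \bigl(n^{(2d-3)/2}/\sqrt q\bigr) \simeq (n/q)^{1/2}$ in every dimension, so Chebyshev gives $\mathbb P(\Gamma_t^c) \lesssim \operatorname{exp}\bigl(-c\,\tau^{2/(2d-1)}(n/q)^{1/(2d-1)}\bigr)$, and the requirement $\tfrac12 \tau^{q/2} > \operatorname{exp}\bigl(-c_1 (n/q)^{1/(2d-1)}\bigr)$ from Lemma~\ref{l.condIndep} forces $q \lesssim n^{1/(2d)}$, yielding the gain $\sqrt q \simeq n^{1/(4d)}$. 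Your write-up asserts the constraint $q\simeq n^{1/(2d)}$ without identifying $\alpha_d$ or $\gamma_d$; moreover, if your stated form of the lemma were true, the same bookkeeping would give the ratio $n^{(d-1)/2}/\sqrt q$ inside the exponential and would permit $q$ as large as roughly $n^{(d-1)/(2d)}$, i.e.\ a gain $n^{(d-1)/(4d)}$, far stronger than the theorem — a clear signal that the imported estimate is too strong. To repair the argument, replace your key import by Lemma~\ref{l.exp4} exactly as stated (noting that the cited source proves it for $q=1$, and the block version requires the same argument), and then run the Chebyshev and Lemma~\ref{l.condIndep} computation above to justify $q \simeq n^{1/(2d)}$.
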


This estimate, when specialized to $ d=3$ is worse than that of Theorem~\ref{t.3} due to the fact that the 
full extension of the critical estimate Lemma~\ref{l.exp} is not known to  hold in dimensions $ d\ge 4$.  
Instead, this estimate is known.   Fix the coefficients $ a_R \in \{\pm1\}$ as in Theorem~\ref{t.4}, 
and let $ f _{\vec r}$ be the corresponding $ \mathsf r$-functions.  For $ 1 \ll q \ll n$, define $ \mathbb I _t$ 
as above, namely $ \{\vec r \;:\; \lvert  \vec r\rvert=n\,,\ r_1\in  J _t \}$.  Define 
$ {\textstyle\bigsqcap}_t$ as in \eqref{e.cap}.  The analog of Lemma~\ref{l.exp} in dimensions $ d\ge 4$ are 

\begin{lemma}\label{l.exp4}  In dimensions $ d\ge 4$ we have the estimate 
\begin{equation*}
\norm {\textstyle\bigsqcap}_t . \operatorname {exp}(L ^{2/(2d-1)}).  \lesssim n ^{(2d-3)/2} /\sqrt q \,. 
\end{equation*}
\end{lemma}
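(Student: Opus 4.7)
The plan is to mirror the proof of Lemma~\ref{l.exp} in dimension $d$, invoking the known $d$-dimensional coincidence estimate in place of the finer one available when $d=3$; this substitution is what forces the exponent $2/(2d-1)$ rather than $2/3$.

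First I would unpack the definition \eqref{e.cap}. Since $r_1=s_1$ forces $R_1=S_1$ (distinct dyadic intervals of equal length are disjoint), writing $\vec r=(r_1,\vec r\,')$ and $x=(x_1,x')$, the product $f_{\vec r}\cdot f_{\vec s}$ restricted to a slab $I\times[0,1]^{d-1}$ of width $2^{-r_1}$ becomes $\mathbf 1_I(x_1)$ times the product of two $\mathsf r$-functions on $[0,1]^{d-1}$ with parameters $\vec r\,'$ and $\vec s\,'$. Expanding in each remaining coordinate $j\ge 2$ --- either $r_j=s_j$, contributing an indicator, or $r_j\ne s_j$, contributing a Haar function at the finer scale up to sign --- exhibits ${\textstyle\bigsqcap}_t$ as an explicit Haar series on $[0,1]^d$, whose $L^2$ norm is of order $n^{(2d-3)/2}/\sqrt q$ by orthogonality and the count $\lvert\mathbb I_t\rvert\simeq n^{d-1}/q$.

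Next I would compute the even moments $\mathbb E\,{\textstyle\bigsqcap}_t^{2m}$. Expanding and integrating, Haar orthogonality collapses the sum to those $2m$-tuples $\{(\vec r^{(\ell)},\vec s^{(\ell)})\}_{\ell=1}^{2m}$ whose resulting one-dimensional Haar functions pair up in every coordinate --- precisely the coincidence-counting problem of \cite{MR2414745}*{\S 4}. The higher-dimensional version of that count, which relies only on generic Haar orthogonality without the finer combinatorial structure available when $d=3$, bounds the number of such tuples by $(Cm)^{(2d-1)m}(n^{2d-3}/q)^m$, yielding
\begin{equation*}
\norm{\textstyle\bigsqcap}_t .2m. \lesssim m^{(2d-1)/2}\cdot n^{(2d-3)/2}/\sqrt q.
\end{equation*}
The standard passage from moment growth of order $m^{\alpha/2}$ into exponential Orlicz norms in $\operatorname{exp}(L^{2/\alpha})$, with $\alpha=2d-1$, then delivers the stated bound.

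The principal obstacle is the coincidence count itself. The refinement of \cite{MR2414745} underpinning Lemma~\ref{l.exp} saves an additional factor in each free coordinate beyond what bare orthogonality provides, and no analog of this saving is presently known when $d\ge 4$. Improving this combinatorial estimate would sharpen the Orlicz exponent here and, via the argument of Theorem~\ref{t.3}, translate directly into a stronger lower bound in Theorem~\ref{t.4}.
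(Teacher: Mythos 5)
First, for calibration: the paper does not actually prove Lemma~\ref{l.exp4}. It is quoted from \cite{MR2409170}*{Section 5, especially (5.3)}, where the simple--coincidence (``Beck gain'') estimate is established in all dimensions for $q=1$, and the remaining details (including the $q$-dependence) are explicitly omitted. The known proof there is not a counting argument: it proceeds via conditional expectations and iterated Littlewood--Paley (martingale square function) inequalities, one application per scale parameter of the products $f_{\vec r}f_{\vec s}$, each costing a factor $\sqrt p$; that is the source of the $L^p$ growth $p^{(2d-1)/2}$, equivalently the Orlicz space $\operatorname{exp}(L^{2/(2d-1)})$, and restricting $r_1$ to $J_t$ merely cuts the number of summands by a factor of $q$, producing the $1/\sqrt q$. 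Your framing of the target is correct: the $L^2$ size $n^{(2d-3)/2}/\sqrt q$, the needed moment growth $\lVert \bigsqcap_t\rVert_{2m}\lesssim m^{(2d-1)/2}\,n^{(2d-3)/2}/\sqrt q$, and the standard conversion of such moment bounds into the exponential Orlicz norm are all as you say; you are also right that the $d=3$ advantage in Lemma~\ref{l.exp} is an improvement in integrability ($p^{3/2}$ versus $p^{(2d-1)/2}$), which is what would strengthen Theorem~\ref{t.4}.

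The gap is the moment estimate itself. The assertion that Haar orthogonality collapses $\mathbb E\,\bigsqcap_t^{2m}$ to tuples whose one--dimensional Haar functions ``pair up in every coordinate'' is false: a coarse Haar function is constant on the support of finer ones, so for instance $\mathbb E\, h_I h_J^2\neq 0$ when $J\subsetneq I$ even though $h_I$ is unpaired, and in the expansion of $\bigsqcap_t^{2m}$ the nonvanishing terms are only constrained to lie in the much larger family of configurations in which, in each coordinate, the maximal occurring parameter is attained by at least two factors. Enumerating these configurations with the correct $m$-dependence is precisely the hard core of the Beck--gain estimates; your bound $(Cm)^{(2d-1)m}(n^{2d-3}/q)^m$ is asserted rather than proved, and it cannot be quoted as a ``higher-dimensional version'' of the count in \cite{MR2414745}*{Section 4}, since what is actually available in dimensions $d\ge 4$ is the $L^p$ inequality (5.3) of \cite{MR2409170}, proved by the square-function route and stated for $q=1$. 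The repair is to invoke that estimate directly and observe that its proof localizes to the block $\mathbb I_t$, the reduced range of $r_1$ supplying the extra factor $1/\sqrt q$; as written, the combinatorial step at the center of your argument is missing.
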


See  \cite{MR2409170}*{Section 5, especially (5.3)}, which proves the estimate above for the case of $ q=1$.  
The details of the proof of Theorem~\ref{t.4} are omitted, since the Theorem is at this moment only 
a curiosity. 

It would be quite interesting to extend Theorem~\ref{t.4} to the case where, say, 
one-half of the coefficients are permitted to be zero.  This result would have implications for 
Kolmogorov entropy of certain Sobolev spaces; as well this case is much more indicative of the 
case of general coefficients $ a_R$.  As far as the authors are aware, there is no straight forward 
extension of this argument to the case of even a small percentage of the $ a_R$ being zero.


\begin{bibsection}
\begin{biblist}

\bib{MR1032337}{article}{
    author={Beck, J{\'o}zsef},
     title={A two-dimensional van Aardenne-Ehrenfest theorem in
            irregularities of distribution},
   journal={Compositio Math.},
    volume={72},
      date={1989},
    number={3},
     pages={269\ndash 339},
      issn={0010-437X},
    review={MR1032337 (91f:11054)},
}

\bib{MR903025}{book}{
    author={Beck, J{\'o}zsef},
    author={Chen, William W. L.},
     title={Irregularities of distribution},
    series={Cambridge Tracts in Mathematics},
    volume={89},
 publisher={Cambridge University Press},
     place={Cambridge},
      date={1987},
     pages={xiv+294},
      isbn={0-521-30792-9},
    review={MR903025 (88m:11061)},
}
  
\bib{MR2375609}{article}{
   author={Bilyk, Dmitriy},
   author={Lacey, Michael},
   author={Vagharshakyan, Armen},
   title={On the signed small ball inequality},
   journal={Online J. Anal. Comb.},
   number={3},
   date={2008},
   pages={Art. 6, 7},
   issn={1931-3365},
   review={\MR{2375609}},
}

\bib{MR2409170}{article}{
   author={Bilyk, Dmitriy},
   author={Lacey, Michael T.},
   author={Vagharshakyan, Armen},
   title={On the small ball inequality in all dimensions},
   journal={J. Funct. Anal.},
   volume={254},
   date={2008},
   number={9},
   pages={2470--2502},
   issn={0022-1236},
   review={\MR{2409170}},
}

\bib{MR2414745}{article}{
   author={Bilyk, Dmitriy},
   author={Lacey, Michael T.},
   title={On the small ball inequality in three dimensions},
   journal={Duke Math. J.},
   volume={143},
   date={2008},
   number={1},
   pages={81--115},
   issn={0012-7094},
   review={\MR{2414745}},
}
 
\bib{MR637361}{article}{
   author={Hal{\'a}sz, G.},
   title={On Roth's method in the theory of irregularities of point
   distributions},
   conference={
      title={Recent progress in analytic number theory, Vol. 2},
      address={Durham},
      date={1979},
   },
   book={
      publisher={Academic Press},
      place={London},
   },
   date={1981},
   pages={79--94},
   review={\MR{637361 (83e:10072)}},
}

\bib{MR0066435}{article}{
   author={Roth, K. F.},
   title={On irregularities of distribution},
   journal={Mathematika},
   volume={1},
   date={1954},
   pages={73--79},
   issn={0025-5793},
   review={\MR{0066435 (16,575c)}},
}

\bib{MR0319933}{article}{
   author={Schmidt, Wolfgang M.},
   title={Irregularities of distribution. VII},
   journal={Acta Arith.},
   volume={21},
   date={1972},
   pages={45--50},
   issn={0065-1036},
   review={\MR{0319933 (47 \#8474)}},
}

\bib{MR0491574}{article}{
   author={Schmidt, Wolfgang M.},
   title={Irregularities of distribution. X},
   conference={
      title={Number theory and algebra},
   },
   book={
      publisher={Academic Press},
      place={New York},
   },
   date={1977},
   pages={311--329},
   review={\MR{0491574 (58 \#10803)}},
}

\bib{MR95k:60049}{article}{
    author={Talagrand, Michel},
     title={The small ball problem for the Brownian sheet},
   journal={Ann. Probab.},
    volume={22},
      date={1994},
    number={3},
     pages={1331\ndash 1354},
      issn={0091-1798},
    review={MR 95k:60049},
}

\bib{MR96c:41052}{article}{
    author={Temlyakov, V. N.},
     title={An inequality for trigonometric polynomials and its application
            for estimating the entropy numbers},
   journal={J. Complexity},
    volume={11},
      date={1995},
    number={2},
     pages={293\ndash 307},
      issn={0885-064X},
    review={MR 96c:41052},
}

\end{biblist}
\end{bibsection}

\end{document}